\title{New bounds on the density of lattice coverings}
\author{Or Ordentlich}
\address{School of Computer Science and Engineering, Hebrew University}
\author{Oded Regev}
\address{Courant Institute of Mathematical Sciences, New York University}
\author{Barak Weiss}
\address{School of Mathematical Sciences, Tel Aviv University}
\font\sb = cmbx8 scaled \magstep0
\font\sn = cmssi8 scaled \magstep0
\font\si = cmti8 scaled \magstep0
\long\def\comoded#1{\ifdraft{{\color{blue}\si Oded: #1 }}\else\ignorespaces\fi}
\long\def\combarak#1{\ifdraft{\color{red}\sn Barak: #1 }\else\ignorespaces\fi}
\long\def\comor#1{\ifdraft{\color{cyan}\sb Or: #1  }\else\ignorespaces\fi}
\numberwithin{equation}{section}
\newcommand{\KK}{\mathcal{K}}
\newcommand{\NN}{\mathcal{N}}
\newif\ifdraft\drafttrue
\newcommand\name[1]{\label{#1}{\ifdraft{\sn [#1]}\else\ignorespaces\fi}}
\newcommand\eq[2]{{\ifdraft{\ \tt [#1]}\else\ignorespaces\fi}\begin{equation}\label{#1}{#2}\end{equation}}
\newcommand {\equ}[1]{\eqref{#1}}
\newcommand{\R}{{\mathbb{R}}}
\newcommand{\TT}{{\mathbb{T}}}
\newcommand{\Z}{{\mathbb{Z}}}
\newcommand{\BB}{{\mathcal{B}}}
\newcommand{\SSS}{{\mathcal{S}}}
\newcommand{\AAA}{{\mathcal{A}}}
\newcommand{\Vol}{{\mathrm{Vol}}}
\newcommand{\crog}{{c_{\mathrm{Rog}}}}
\newcommand{\prob}{{\Pr}}
\newcommand{\Gr}{{\mathrm{Gr}}}
\newcommand{\N}{{\mathbb{N}}}
\newcommand{\F}{{\mathbb{F}}}
\newcommand{\GL}{\operatorname{GL}}
\newcommand{\SL}{\operatorname{SL}}
\newcommand {\ignore}[1]  {}
\newcommand{\spa}{{\rm span}}
\newcommand{\PP}{{\mathcal P}}
\newcommand{\PPdisc}{{\mathcal P^{\mathrm{(disc)}}_{L}}}
\newcommand{\PPdiscbar}{{\overline{\mathcal P}^{\mathrm{(disc)}}_{L}}}
\newcommand{\LL}{{\mathcal L}}
\newcommand{\df}{{\, \stackrel{\mathrm{def}}{=}\, }}
\newcommand{\sm}{\smallsetminus}
\newcommand{\vre}{\varepsilon}
\newcommand{\Convn}{{\mathrm{Conv}_n}}
\newtheorem{thm}{Theorem}[section]
\newtheorem{lem}[thm]{Lemma}
\newtheorem{prop}[thm]{Proposition}
\newtheorem{cor}[thm]{Corollary}
\newtheorem{remark}[thm]{Remark}
\begin{document}
\date{\today}
\maketitle

\begin{abstract}
We obtain new upper bounds on the minimal density $\Theta_{n, \KK}$ of lattice
coverings of $\R^n$ by dilates of a convex body $\KK$. We also obtain
bounds on the probability (with respect to the natural Haar-Siegel
measure on the space of lattices) that a randomly chosen lattice $L$
satisfies $L+\KK = \R^n$. As a step in the proof, we utilize and strengthen
results on the discrete Kakeya problem.

\end{abstract}

\section{Introduction}
The classical lattice covering problem asks for the most economical way
to cover space by overlapping Euclidean balls centered at points of a
lattice. To make this precise, given a lattice $L \subset \R^n$,
normalized so that it has covolume one, 
define its {\em covering density}, denoted $\Theta(L)$, to be the minimal volume of a closed
Euclidean ball $B_r$, for which  $\R^n = L + B_r.$ Define
$$\Theta_n \df \inf\{\Theta(L) : L \text{ is a lattice of covolume one
  in } \R^n \}.$$
Similarly, let $\KK \in \Convn$, where $\Convn$
denotes the set of 
compact convex subsets of  $\R^n$ with nonempty interior. We define
the {\em $\KK$-covering density} of $L$,
denoted $\Theta_\KK(L)$, to be the minimal volume of a dilate $r
\cdot \KK$  such that $\R^n = L + r \cdot \KK$, and define
$$\Theta_{n, \KK} \df \inf\{\Theta_\KK(L) : L \text{ is a lattice of covolume one
  in } \R^n \}.
$$
The quantities $\Theta_n$ and $\Theta_{n , \KK}$ have been intensively
investigated, both for individual $n$ and $\KK$, and asymptotically
for large $n$, and many questions remain open. 
Standard references are \cite{ConwaySloane, GL, Rogers_pandc}.

The collection $\LL_n$ of lattices of covolume one in $\R^n$ can be identified
with the quotient $\SL_n(\R)/\SL_n(\Z)$, via the map
\eq{eq: mapsto}{
  g \SL_n(\Z) \mapsto g \Z^n \ \  (g \in \SL_n(\R)).}
This identification endows $\LL_n$ with a natural probability
measure; namely, there is a unique $\SL_n(\R)$-invariant Borel
probability measure on $\LL_n$. We will refer to this measure as the
{\em Haar-Siegel measure} and denote it by
$\mu_n$.
%It is of interest to understand the typical value of the density
%$\Theta(L)$ for a $\mu_n$-random choice of $L$.
In this paper we give new bounds on $\Theta_{n, \KK}$ and 
on the $\mu_n$-typical value of
$\Theta_{\KK}(L)$.

\begin{thm}\name{thm: improving Rogers}
  There is $c>0$ so that for any $n \in \N$ and any $\KK \in \Convn$,
 \eq{eq: improving Rogers}{
\Theta_{n, \KK} \leq c n^2.
}
%In fact, we have
%\eq{eq: improving Rogers2}{
%\limsup_{n \to \infty} \frac{\sup_{\KK \in \Convn} \Theta_{n,\KK}}{n^2}\leq e^2.
%}
\end{thm}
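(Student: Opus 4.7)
My plan is to prove the bound probabilistically: I will show that for some $r$ with $r^n\Vol(\KK)\leq cn^2$, the $\mu_n$-probability that a random lattice $L\in\LL_n$ satisfies $L+r\KK=\R^n$ is positive, which immediately implies the theorem. By the invariance $\Theta_{g\KK}(gL)=\Theta_\KK(L)$ for $g\in\SL_n(\R)$, combined with the obvious translation invariance $\Theta_\KK=\Theta_{\KK+v}$, I may first normalize $\KK$ to be in John position, after which $B\subset\KK\subset nB$ for the Euclidean unit ball $B$; in particular $0\in\interior(\KK)$ and the convex-combination identity $\lambda\KK+\mu\KK=(\lambda+\mu)\KK$ holds for all $\lambda,\mu>0$.

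The first step is a short geometric enlargement observation: writing $r\KK=(r-\epsilon)\KK+\epsilon\KK$ and using convexity, if $x\notin L+r\KK$ then the bubble $x+\epsilon(-\KK)$ is disjoint from $L+(r-\epsilon)\KK$. Consequently the failure event $\{L+r\KK\neq\R^n\}$ forces the uncovered volume of $L+(r-\epsilon)\KK$ modulo $L$ to be at least of order $\epsilon^n\Vol(\KK)$. The expected uncovered volume itself is controlled via a Cauchy--Schwarz / Paley--Zygmund calculation from Siegel's mean-value identity $\EE_{\mu_n}\#(L\cap A)=\Vol(A)$ and Rogers' pair-correlation formula, yielding an upper bound of the form $1/(s^n\Vol(\KK)+1)$ on the expected uncovered volume of $L+s\KK$. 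A direct Markov step with $s=r-\epsilon$ and $\epsilon=r/2$ then gives a bound of order $\Theta_{n,\KK}\lesssim 2^n$, which is still exponential in $n$ and therefore far from the target.

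The decisive improvement from exponential to polynomial is where the sharpened discrete Kakeya input advertised in the abstract must enter. The enlargement lemma only records the single bubble $x+\epsilon(-\KK)$ of uncovered points per failure, but an uncovered $x$ in fact forces a much richer, Kakeya-like configuration of uncovered points, organised along line segments in many directions through~$x$. A Kakeya-type lower bound, applicable to a typical random lattice, should quantify the joint size of this configuration and allow one to replace the naive $\epsilon^n\Vol(\KK)$ in the Markov step by a considerably larger lower bound on the uncovered volume. Rebalancing $r$ and $\epsilon$ against this sharpened estimate is what is expected to produce the polynomial bound $r^n\Vol(\KK)=O(n^2)$.

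The main obstacle, on which essentially all the work is concentrated, is the discrete Kakeya input itself: one needs a version that lives in the torus/lattice setting (rather than the classical $\F_p^n$ setting of Dvir), that survives the random choice of lattice, and that combines cleanly with the Rogers second-moment estimates so that union-bound type losses do not absorb the polynomial gain. The John normalization, the enlargement lemma, and the Siegel/Rogers mean-value identities are routine and contribute only constants; the entire polynomial exponent $n^2$ will be accounted for by the strength of the underlying discrete Kakeya estimate and its integration into the probabilistic framework.
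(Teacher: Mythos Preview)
Your proposal correctly isolates discrete Kakeya as the decisive ingredient, but the mechanism you sketch for invoking it is not the one that works, and the picture you have of the Kakeya structure is essentially inverted. You speculate that an uncovered point $x$ forces a Kakeya-like configuration of \emph{uncovered} points along many segments through $x$; there is no geometric reason this should hold for an arbitrary convex body, and the paper does not argue this way. Nor does one need a torus/lattice variant of Kakeya --- the bound is applied directly in $\F_p^n$, and the passage to $\F_p^n$ is the idea you are missing.

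The actual mechanism is the Hecke correspondence (Propositions~\ref{prop: Hecke} and~\ref{prop: still haar}): a Haar--Siegel random lattice of covolume $p^{-2}$ can be generated by first choosing $L=g\Z^n$ according to $\mu_n$, then choosing a uniformly random $2$-plane $S\in\Gr_{n,2}(\F_p)$, and setting $L'=p^{-1}g\,\pi_p^{-1}(S)$. One takes $p$ a prime in $[n,2n]$. Rogers' \emph{first}-moment bound (Theorem~\ref{thm: Rogers bound}, not a second-moment/Paley--Zygmund argument) shows that with high probability a dilate $J$ of $\KK$ of bounded volume $V$ leaves uncovered density at most $e^{-V/2}$ in $\TT_L$. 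One then discretizes: the grid $\frac{1}{p}L/L$ is identified with $\F_p^n$, and the set $K'\subset\F_p^n$ of grid points lying in $\pi_L(J)$ (after a suitable shift) has density $\geq 1-e^{-V/2}$. Now the rank-$2$ $\vre$-Kakeya bound (Corollary~\ref{cor: bound eps Kakeya}) is applied to the \emph{complement} $\F_p^n\setminus K'$: since it is too small to be an $\vre$-Kakeya set of rank $2$, for all but an $\vre$-fraction of $S\in\Gr_{n,2}(\F_p)$ every translate $x+S$ meets $K'$. For such $S$ one obtains $u+\frac{1}{p}L\subset L'+J$, and combining with $L+2J=\R^n$ (Lemma~\ref{lem: half to full}) rescaled by $1/p$ gives $L'+(1+\tfrac{2}{p})J=\R^n$. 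The factor $n^2$ is exactly $p^2$, the index $[L':L]$, and comes from using rank $r=2$.

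The John normalization, the bubble enlargement, and the second-moment machinery in your outline play no role; they are replaced entirely by the Hecke step, which is the missing idea.
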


This improves on the best previous bound of $n^{\log_2 \log n + c},$
which was proved by  Rogers \cite{Rogers_again}. 
We note that 
for the case that $\KK$ is the Euclidean ball,
Rogers obtained $\Theta_n \leq n 
\, (\log n)^c$~\cite{Rogers_again}, and this was extended to certain symmetric convex
bodies by Gritzmann \cite{Gritzmann}. This bound is better than what
we obtain here.

We will actually prove the following measure estimate, from which Theorem~\ref{thm: improving Rogers} follows immediately. 
  
\begin{thm}\name{thm: main} There are positive constants $c_1, c_2,
  c_3, c_4$ such that for any $n \in \N$, 
  any $\KK \in \Convn$, and any %of volume
  \eq{eq: constraints M}{
    M 
    \in \left[c_3 n^2 , c_4 n^3\right],
  }
   we have 
\eq{eq: conclusion main}{
\mu_n\left( \left\{L \in \LL_n : \Theta_{\KK}(L)>M \right \} \right) <
c_1 \, e^{-\frac{c_2 M}{n^2}}.
  }
\end{thm}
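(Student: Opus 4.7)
Write $r = (M/\Vol(\KK))^{1/n}$ so that $\Vol(r\KK) = M$. The event $\Theta_\KK(L) > M$ is equivalent to the existence of $x \in \R^n$ such that $L \cap (x - r\KK) = \emptyset$; that is, some translate of $-r\KK$ is disjoint from $L$. The plan is to (a) discretize this existential statement into a finite union, (b) obtain an exponentially small bound on the probability that any single translate is disjoint from $L$, and (c) combine via union bound.

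\textbf{Step 1 (Reduction to a finite grid).} The condition depends on $x$ only modulo $L$. After discarding an exceptional set of lattices of small $\mu_n$-measure (using Minkowski's second theorem, or Mahler-style compactness, to rule out lattices with very short vectors or very long successive maxima), we may cover a suitable fundamental domain of $L$ by $\exp(O(n))$ translates $\{x_i - \tfrac{1}{2} r\KK\}$ so that \emph{some} translate of $r\KK$ missing $L$ forces \emph{some} $x_i - r\KK$ to miss $L$ as well. This reduces the measure estimate to
\[
\mu_n(\Theta_\KK(L) > M) \; \le \; \text{(exceptional part)} \; + \; \exp(O(n)) \cdot \sup_{x} \mu_n\bigl(\{L : L \cap (x - r\KK) = \emptyset\}\bigr).
\]

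\textbf{Step 2 (Single-body miss probability).} For $x \in \R^n$, set $A = x - r\KK$, a convex body of volume $M$. The heart of the argument is to show
\[
\mu_n\bigl(\{L : L \cap A = \emptyset\}\bigr) \; \le \; c_1' \exp\!\bigl(-c_2' M/n^2\bigr).
\]
The Siegel mean value theorem gives $\EE[\,|L \cap A|\,] = M$; the second-moment / Rogers formula gives only a polynomial $O(1/M)$ bound by Paley--Zygmund. To upgrade this to an exponential bound, the idea is to control an exponential moment $\EE[\lambda^{-|L \cap A|}]$ via higher-order Rogers formulas. These higher moments split into a ``generic'' main term (of size $M^k$, behaving like $k$ independent draws) and ``degenerate'' sub-lattice contributions coming from lower-dimensional arithmetic configurations in $L \cap A$. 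The degenerate terms are exactly what is controlled by the strengthened discrete Kakeya estimates promised in the abstract: such estimates bound the density of large collections of lattice points sharing arithmetic structure, and this is the only place where the exponent $1/n^2$ enters the bound (reflecting the Kakeya exponent in the relevant dimension).

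\textbf{Step 3 (Assembling).} Combining Steps 1 and 2 yields a bound of the form $\exp\bigl(O(n) - c_2' M/n^2\bigr)$. The lower constraint $M \ge c_3 n^2$, with $c_3$ sufficiently large, absorbs the $O(n)$ loss from the union bound at the cost of replacing $c_2'$ by a smaller $c_2$. The upper constraint $M \le c_4 n^3$ is imposed so that the higher-moment estimates from Step 2 (which are valid only when the relevant expansion does not overwhelm the leading term) remain usable, and so that the discretization grid in Step 1 does not balloon out of control.

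The main obstacle is Step 2. Siegel--Rogers moment methods in their classical form give at best polynomial decay; obtaining exponential decay with the correct rate $M/n^2$ requires both (i) a careful truncation of the moment expansion to allow recovery of an exponential generating function, and (ii) a sharpened bound on the degenerate contributions, which is where the new discrete Kakeya input is crucial. Once Step 2 is in place, Steps 1 and 3 are essentially bookkeeping with Minkowski-type comparison inequalities between $\KK$ and Euclidean balls.
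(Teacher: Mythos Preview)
Your proposal misidentifies where and how the discrete Kakeya input enters, and Step~2 as stated is a genuine gap rather than a plan. The paper does \emph{not} prove any bound of the form $\mu_n(\{L: L\cap A=\emptyset\})\le \exp(-cM/n^2)$ for a fixed body $A$, nor does it invoke higher Rogers moment formulas or an exponential generating function. Your description of Kakeya as controlling ``degenerate sub-lattice contributions'' in a moment expansion has no counterpart in the argument and is not something one could carry out with the tools at hand; if you try to push this through you will find that Rogers' higher-moment error terms are handled by zeta-function bounds that give nothing like the $M/n^2$ exponent, and there is no visible mechanism by which Kakeya would improve them.

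What the paper actually does is structurally quite different. Fix a prime $p\in[n,2n]$ and use a Hecke-type decomposition (Proposition~\ref{prop: still haar}): a Haar--Siegel random lattice of covolume $p^{-2}$ can be realized as $L'=p^{-1}g\,\pi_p^{-1}(S)$ where $L=g\Z^n$ is $\mu_n$-random and $S\in\Gr_{n,2}(\F_p)$ is uniform and independent. One first takes $J$ a dilate of $\KK$ of volume $V\asymp M/n^2$ and applies the \emph{first-moment} Rogers bound (Theorem~\ref{thm: Rogers bound}) together with Markov to get that, except on an event of probability $O(e^{-V/2})$, the uncovered fraction $\vre(J,L)$ is at most $e^{-V/2}$. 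Conditioned on this, the uncovered set meets a suitable translate of the discrete grid $\frac{1}{p}L/L\cong\F_p^n$ in at most an $e^{-V/2}$ fraction of points. Now the rank-$2$ $\vre$-Kakeya bound (Corollary~\ref{cor: bound eps Kakeya}) is applied to this small \emph{complement}: it says that for all but an $e^{-V/2}e^{2n/p}$ fraction of $S\in\Gr_{n,2}(\F_p)$, every coset $x+S$ hits the covered set, which forces $L'+(1+2/p)J=\R^n$ after one application of the half-to-full covering lemma. The upper bound $M\le c_4 n^3$ is there only to ensure $V\le \eta_n$ so that Rogers' theorem applies. In short, Kakeya is used not to control moment errors but to guarantee that a random rank-$2$ refinement of $L$ plugs all the holes left by $J$; the Hecke equivariance then converts this into a statement about a $\mu_n$-random lattice.
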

We remark that the constants appearing in the 
statement of Theorem~\ref{thm: main} can be explicitly estimated.

\begin{remark}
As we will show in Appendix \ref{appendix B}, 
the left-hand side of \equ{eq: conclusion main} is at least $C/M$, for some constant $C$ depending on $n$ and $\KK$. It follows that some upper 
bound on $M$ is required if \equ{eq: conclusion main} is to hold. It also follows that the expectation of $\Theta_\KK$ with respect to the measure $\mu_n$ is infinite. 
\end{remark}

%We now deduce several consequences from this result. Let $\Convn$
%denote the collection of compact convex subsets of $\R^n$. Solving for the
%value of $M$ which makes the 
%right-hand-side of \equ{eq: conclusion main} smaller than 1, we
%immediately obtain:

%\begin{cor}\name{cor: improving Rogers}
%  There is $c>0$ so that for any $n \in \N$, 
%  $$
%\sup_{\KK \in \Convn} \Theta_{n, \KK} \leq c n^2 \log n.
%$$

%\end{cor}

Setting $M = c_4 n^3$ in \equ{eq: conclusion
  main}, we see that

\begin{cor}\name{cor: improving Strombergsson}
There is a constant $c>0$ such that 
  for any %$\delta > 0$, and any
  sequence $\KK_n \in \Convn$, the
Haar-Siegel probability that $\Theta_{n,\KK_n} (L)
\leq cn^3$ tends to 1 exponentially fast, as $n \to \infty$. 
\end{cor}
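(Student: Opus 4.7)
The plan is to deduce this corollary directly from Theorem~\ref{thm: main} by specializing the free parameter $M$ to the upper endpoint of the allowed range \equ{eq: constraints M}. For each $n$ we apply Theorem~\ref{thm: main} to the body $\KK_n \in \Convn$ with $M = c_4 n^3$, which is trivially in $[c_3 n^2, c_4 n^3]$ for all $n$.

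First I would substitute $M = c_4 n^3$ into the right-hand side of \equ{eq: conclusion main}. The exponent becomes
\[
-\frac{c_2 \cdot c_4 n^3}{n^2} \;=\; -c_2 c_4 \, n,
\]
so Theorem~\ref{thm: main} yields
\[
\mu_n\!\left(\left\{L \in \LL_n : \Theta_{\KK_n}(L) > c_4 n^3\right\}\right) \;<\; c_1 \, e^{-c_2 c_4 \, n}.
\]
Taking complements gives
\[
\mu_n\!\left(\left\{L \in \LL_n : \Theta_{\KK_n}(L) \leq c_4 n^3\right\}\right) \;>\; 1 - c_1 \, e^{-c_2 c_4 \, n},
\]
which tends to $1$ exponentially fast as $n \to \infty$. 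Setting $c \df c_4$ then concludes the proof.

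There is no real obstacle here: the corollary is essentially a bookkeeping exercise, and the only point to check is that $M = c_4 n^3$ satisfies the admissibility condition \equ{eq: constraints M}, together with the observation that this choice is precisely the one that converts the quantitative estimate of Theorem~\ref{thm: main} into an \emph{exponentially small} tail probability (any smaller choice of $M$, such as $M$ of order $n^2$, would only yield an $O(1)$ probability bound and hence would not suffice for convergence to $1$). Since the bound is uniform over $\KK \in \Convn$, the sequence $(\KK_n)$ plays no role beyond selecting the relevant body in each dimension.
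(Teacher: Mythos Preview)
Your proof is correct and matches the paper's own argument, which consists of the single sentence ``Setting $M = c_4 n^3$ in \equ{eq: conclusion main}, we see that'' preceding the corollary. The only minor imprecision is that $M = c_4 n^3 \in [c_3 n^2, c_4 n^3]$ requires $n \geq c_3/c_4$ rather than holding for \emph{all} $n$, but since the statement is asymptotic this is immaterial.
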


This resolves a question of Str\"ombergsson, who showed in 
\cite{Strombergsson} that the conclusion holds with
$\Theta_{n,\KK_n} (L)
\leq (1+\delta)^n$ and $\delta >
\delta_0$, for an explicit number 
$\delta_0 = 0.756...$.

We introduce two quantities which describe the growth rate of the
Haar-Siegel typical covering density. Let 
$$
\tau_{\circ} \df \inf \left\{s> 0 : \mu_n %\left(
    \left\{L \in \LL_n : \Theta(L) < n^{s}
  \right\} %\right ) 
\longrightarrow_{n \to \infty} 1 \right \}
$$
and 
$$
\tau \df \inf \left \{s> 0 : \inf_{\KK \in \Convn} \, \mu_n%\left(
    \left\{L \in \LL_n : \Theta_{\KK}(L) < n^{s}
  \right\} %\right ) 
\longrightarrow_{n \to \infty} 1 \right \}.
$$
Clearly $\tau_\circ \leq \tau$, and a result of Coxeter,
Few and Rogers \cite{CoxeterFewRogers} implies that 
$\tau_\circ \geq 1$. 
Plugging $M = n^{2 + \vre}$ into \equ{eq: conclusion main} we deduce
the following.

\begin{cor}\name{cor: tau min}
We have $\tau \leq 2.$ 
\end{cor}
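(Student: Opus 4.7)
The plan is to deduce the corollary directly from Theorem~\ref{thm: main} by making an appropriate choice of $M$. Fix any $s > 2$, and choose some $\varepsilon \in (0, \min(1, s-2))$. Set $M_n \df n^{2+\varepsilon}$. Since $\varepsilon > 0$ we have $M_n/n^2 = n^\varepsilon \to \infty$, and since $\varepsilon < 1$ we have $M_n/n^3 = n^{\varepsilon-1} \to 0$. Hence for all sufficiently large $n$, $M_n \in [c_3 n^2, c_4 n^3]$, which is precisely the constraint \equ{eq: constraints M}.

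For such $n$ and for any $\KK \in \Convn$, Theorem~\ref{thm: main} applies with $M = M_n$ and yields
\eq{eq: cor plug in}{
  \mu_n\bigl(\{L \in \LL_n : \Theta_\KK(L) > n^{2+\varepsilon}\}\bigr) < c_1\, e^{-c_2 n^\varepsilon}.
}
The key point is that the right-hand side of \equ{eq: cor plug in} depends only on $n$ and $\varepsilon$, not on $\KK$, so the estimate is uniform in $\KK \in \Convn$. Since $2+\varepsilon < s$, we have $n^{2+\varepsilon} < n^s$ for every $n \ge 1$, so $\Theta_\KK(L) \le n^{2+\varepsilon}$ implies $\Theta_\KK(L) < n^s$. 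Thus
\[
  \inf_{\KK \in \Convn} \mu_n\bigl(\{L \in \LL_n : \Theta_\KK(L) < n^s\}\bigr) \;\ge\; 1 - c_1\, e^{-c_2 n^\varepsilon} \;\xrightarrow[n\to\infty]{}\; 1.
\]
By definition of $\tau$, this gives $\tau \le s$. Letting $s \downarrow 2$ yields $\tau \le 2$, as claimed.

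There is no substantive obstacle here; the only thing requiring any care is checking that the chosen $M_n$ falls within the allowed window \equ{eq: constraints M}, which is why we must take $\varepsilon < 1$ (so that $M_n \le c_4 n^3$ eventually). The bound $M_n \ge c_3 n^2$ holds automatically for any $\varepsilon > 0$ and all large $n$. Uniformity over $\KK$ in Theorem~\ref{thm: main} is what lets us commute $\inf_\KK$ with the tail estimate, which is the feature of $\tau$ (as opposed to $\tau_\circ$) being used.
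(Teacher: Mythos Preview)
Your proposal is correct and follows exactly the approach indicated in the paper, which simply says to plug $M = n^{2+\vre}$ into \equ{eq: conclusion main}; you have supplied the routine details (checking \equ{eq: constraints M} and the uniformity in $\KK$) that the paper leaves implicit.
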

Prior to our
results it was not known whether $\tau$ and $\tau_\circ$ are finite,
i.e., whether 
the typical behavior of the covering density is polynomial. It would
be interesting to know whether our 
upper bound on $\tau$ and $\tau_\circ$ can be improved.

%\subsection{Representative Application}
\subsection{Simultaneous covering and packing}
We describe another application of Theorem~\ref{thm: main}, improving a
result of Butler \cite{Butler}. 
%To state it, for $\KK\in \Convn$ let the {\em
%  $\KK$-covering radius of $L$} be the minimal $r$ such that $L +
%r\cdot \KK = \R^n$, 
To state it, define
the {\em $\KK$-packing density} of $L$,
denoted $\delta_\KK(L)$, to be the maximal volume of a dilate $r
\cdot \KK$  such that the translates $\{\ell +  r \cdot \KK : \ell \in L\}$ are disjoint.
%, and define
%$$\Theta_{n, \KK} \df \inf\{\Theta_\KK(L) : L \text{ is a lattice of covolume one
%  in } \R^n \}.
%$$
%and let the {\em $\KK$-packing radius of $L$} be
%the maximal $r$ such that the 
%translates $\{ \ell + r \cdot \KK : \ell \in L\}$ are disjoint. Denote these
%numbers by $\lambda_{\mathrm{c}, \KK}(L), \, \lambda_{\mathrm{p},
%  \KK}(L)$  respectively. 
Then we have:
\begin{cor}\name{cor: improving Butler}
  There is $c>0$ such that for all $n \in \N$ and all $\KK \in
  \Convn$, there is $L \in \LL_n$ such that
  \eq{eq: improving Butler}
%  {
%    \frac{\lambda_{\mathrm{c},\KK} (L')}{\lambda_{\mathrm{p}, \KK}(L')}
%  \leq  \left[c \, \frac{\mathrm{Vol}(\KK - 
%    \KK)}{\mathrm{Vol}(\KK)}  \, n^2 \log n \right]^{\frac{1}{n}}.}
  {
    \frac{\Theta_\KK(L)}{\delta_\KK(L)}
  \leq  
    c \, \frac{\mathrm{Vol}(\KK - 
    \KK)}{\mathrm{Vol}(\KK)}  \, n^2 \; .
    }
  \end{cor}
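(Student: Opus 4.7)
The plan is to show that with positive $\mu_n$-probability, a single lattice $L \in \LL_n$ satisfies both a good covering bound (from Theorem~\ref{thm: main}) and a good packing bound (from Siegel's mean value theorem), and then compare the two to obtain the desired ratio.

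For the packing side, first I would rewrite the disjointness condition: the translates $\{\ell + r\KK\}_{\ell \in L}$ are pairwise disjoint if and only if $L \cap r(\KK - \KK) = \{0\}$. Choose $r = r(\KK)$ so that $\Vol(r(\KK - \KK)) = r^n \Vol(\KK - \KK) = \tfrac{1}{2}$. By Siegel's mean value theorem, the $\mu_n$-expected number of nonzero vectors of $L$ in the Borel set $r(\KK - \KK)$ equals $\Vol(r(\KK - \KK)) = \tfrac12$, so by Markov's inequality,
\[
\mu_n\bigl(\{L \in \LL_n : \delta_\KK(L) \geq r^n \Vol(\KK)\}\bigr) \;\geq\; \tfrac12.
\]
Note that $r^n \Vol(\KK) = \tfrac12 \cdot \Vol(\KK)/\Vol(\KK - \KK)$.

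For the covering side, fix a constant $C \geq c_3$ large enough that $c_1 e^{-c_2 C} < \tfrac14$, and set $M = C n^2$. For all $n$ sufficiently large this $M$ lies in the range \equ{eq: constraints M}, so Theorem~\ref{thm: main} gives
\[
\mu_n\bigl(\{L \in \LL_n : \Theta_\KK(L) \leq C n^2\}\bigr) \;>\; \tfrac34.
\]
Since the measures of the two events sum to more than $1$, their intersection is nonempty. Any $L$ in this intersection satisfies
\[
\frac{\Theta_\KK(L)}{\delta_\KK(L)} \;\leq\; \frac{C n^2}{\tfrac12 \, \Vol(\KK)/\Vol(\KK - \KK)} \;=\; 2C \, n^2 \, \frac{\Vol(\KK - \KK)}{\Vol(\KK)},
\]
which is the desired inequality.

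The only mild obstacle is the finitely many small values of $n$ for which $C n^2$ fails to lie in $[c_3 n^2, c_4 n^3]$; these can be absorbed into the constant $c$ by picking any lattice and using the trivial bounds on $\Theta_\KK$ and $\delta_\KK$ (in fact, by a theorem of Rogers–Shephard, $\Vol(\KK - \KK)/\Vol(\KK) \geq 1$, so the dimension-dependent factor on the right is always $\geq n^2$). Thus the argument reduces entirely to combining Theorem~\ref{thm: main} with a standard first-moment packing bound, with no further technical ingredients needed.
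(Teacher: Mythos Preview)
Your proposal is correct and follows essentially the same approach the paper sketches: use Siegel's first-moment formula to get a packing of volume $\gg \Vol(\KK)/\Vol(\KK-\KK)$ with probability bounded away from zero, use Theorem~\ref{thm: main} to get a covering density $\ll n^2$ with probability close to $1$, and take any lattice in the intersection. Your treatment of the finitely many small $n$ is slightly informal, but since Butler's original bound (or simply applying Theorem~\ref{thm: main} with $M=c_4 n^3$ and adjusting the packing threshold) handles those dimensions uniformly in $\KK$, there is no real gap.
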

This improves a previous upper bound of 
%  $\left[\frac{\mathrm{Vol}(\KK -  
%    \KK)}{\mathrm{Vol}(\KK)} n^{\log_2 \log n +
%    c}\right]^{\frac{1}{n}}$ 
  $(\mathrm{Vol}(\KK -  
    \KK)/\mathrm{Vol}(\KK)) n^{\log_2 \log n + c}$ 
proved by Butler~\cite{Butler}. 
The proof follows by observing that (a) a dilate of volume  
$\ll \mathrm{Vol}(\KK)/\mathrm{Vol}(\KK-\KK)$ is 
with high probability packing for a Haar-Siegel random $L$ (by Siegel's theorem~\cite{SiegelFormula}),
and that (b) a dilate of volume 
$\gg n^2$ is 
with high probability covering for a Haar-Siegel random $L$ (by Theorem~\ref{thm: main}).
The union bound then shows that with high probability both events 
hold simultaneously, completing the proof. 
We leave the details to the reader. 

The fact that~\equ{eq: improving Butler} holds with high probability for a $\mu_n$-random lattice can be used to derive the following strengthening. 
Since $\mu_n$ is preserved by the mapping which sends $L$ to its dual $L^*$ (see \equ{eq: dual lattice}), we obtain the existence of a lattice $L$ such that both $L$ and $L^*$ satisfy~\equ{eq: improving Butler}.
%Here, $L^*$ denotes the \emph{dual lattice} of $L$, defined by
%$$
%L^* \df \{x \in \R^n : \forall y \in L, x \cdot y \in \Z\}.
%$$

%volume of a dilate of K below which (say 1/2 of it) we know with
%constant probability we'll get a packing; and c n^2 log n is the volume
%at which we get covering whp

%It reduces to finding values of $\lambda_{\mathrm{c}}, \,
%\lambda_{\mathrm{p}}$ for which each of 
%the four events
%$$\lambda_{\mathrm{c}, \KK} (L) \leq
%\lambda_{\mathrm{c}}, \ \lambda_{\mathrm{c}, \KK} (L^*) \leq
%\lambda_{\mathrm{c}}, \ \lambda_{\mathrm{p}, \KK} (L) \geq
%\lambda_{\mathrm{p}}, \ \lambda_{\mathrm{p}, \KK} (L^*) \geq
%\lambda_{\mathrm{p}} $$
%has $\mu_n$-probability close to 1. 

% in case we get asked, from an email of Oded:
%Vol(K)/Vol(K-K) is the
%volume of a dilate of K below which (say 1/2 of it) we know with
%constant probability we'll get a packing; and c n^2 log n is the volume
%at which we get covering whp

\subsection{Ingredients of the proof} Our proof of Theorem \ref{thm: main} utilizes some lower bounds
on the cardinality of discrete Kakeya sets (see \S \ref{subsec:
  Kakeya}). Specifically, 
relying on a result of Kopparty, Lev, Saraf,
and Sudan \cite{KLSS}, we obtain a new
lower bound on the size of a 
discrete $\vre$-Kakeya set of rank 2, see Corollary \ref{cor: bound
  eps Kakeya}. What is
important for us is that the dependence of this bound on the parameter
$\vre$ is linear.

We also use a
variant of the Hecke correspondence to analyze the properties of a
$\mu_n$-typical lattice. Namely, we show in \S \ref{subsec: Haar
  Siegel} that for parameters $p, r$, if one draws a Haar-Siegel random lattice
$L$, and 
then replaces it by a lattice $L'$ uniformly drawn from those
containing $L$ as a sub-lattice 
of index $p^r$, and with a prescribed quotient group $L'/L$, then $L'$
(properly rescaled) is also Haar-Siegel random.  Our construction is
inspired by a similar construction which was
investigated by Erez, Litsyn and Zamir \cite{Urietal} in the
information theory literature.

\subsection{Acknowledgements}
We are grateful to Uri Erez, Swastik Kopparty, and Alex Samorodnitsky for useful discussions.  
The authors gratefully acknowledge the support of grants ISF 2919/19,
ISF 1791/17,
BSF 2016256, the Simons Collaboration on Algorithms and Geometry, a
Simons Investigator Award, and by the National Science Foundation (NSF)
under Grant No.~CCF-1814524.

\section{Preliminaries}
\subsection{Space of lattices and Haar-Siegel measure}\name{subsec:
  Haar Siegel}
Recall from the introduction that $\LL_n \cong
\SL_n(\R)/\SL_n(\Z)$. This space is endowed with the quotient topology
and hence with the Borel $\sigma$-algebra arising from this
topology. The group  
$\SL_n(\R)$ acts naturally on lattices via the linear action of
matrices on $\R^n$, or equivalently, by left translations on the
quotient $\SL_n(\R)/\SL_n(\Z)$. 
The measure 
$\mu_n$ is the unique Borel probability 
measure on $\LL_n$ which is invariant under this action. From generalities on
coset spaces of Lie groups, such a measure exists and is unique, see 
e.g., \cite{Raghunathan}. 
We will also consider a slightly more general
space, namely for each $c>0$ we write $\LL_{n,c}$ for the collection
of lattices of covolume $c$ in $\R^n$. The obvious rescaling
isomorphism 
$\LL_n \cong \LL_{n,c}$ commutes with the $\SL_n(\R)$-action,
and thus there is a unique $\SL_n(\R)$-invariant 
measure on $\LL_{n,c}$, and we will denote it by $\mu_{n,c}$. We will
refer to any of the measures $\mu_n, \mu_{n,c}$ as the 
  Haar-Siegel measure. 

For a prime $p$ and an integer $r \in \{1, \ldots, n\}$,
%the {\em
%  $(p,r)$-Hecke correspondence} attaches
associate to each lattice $L$ the finite
collection $\Lambda_{p,r}(L)$ of lattices $L'$ in $\R^n$ which contain
$L$ as a sub-lattice, and for which the quotient $L'/L$ is isomorphic to
$\prod_1^r \Z/p\Z$. Note that these lattices
are of covolume $p^{-r}$.
The assignment $L \to \Lambda_{p,r}(L)$ is a particular case of the
so-called {\em Hecke correspondence} (see
e.g., \cite{COU}). The following useful observation is
well-known, we include a proof for completeness.

\begin{prop}\name{prop: Hecke} For each $n,p,r$ as above, let $N =
  |\Lambda_{p,r}(\Z^n)|$. Then for each
$f \in C_c(\LL_{n, p^{-r}})$, i.e., each continuous compactly supported
real valued function on $\LL_{n, p^{-r}}$,
  %and each $r \in \{1,
 % \ldots, n\}$, % we have 
%$$d\mu_{n} (L)= \frac{1}{N} \sum_{L' \in \Lambda_{p,r}(L )}
%  d\mu_{n,p^{-r}}(L'),$$
%  where $N=|\Lambda_{p,r}(\Z^n)|$.
  \eq{eq: Hecke}{
\int f \, d\mu_{n, p^{-r}}  = \int_{\LL_n} \frac{1}{N} \sum_{L' \in
  \Lambda_{p,r}(L)} f(L') \, d\mu_n(L).
  }
  In other words,
  choosing $L'$
  randomly according to Haar-Siegel 
  measure on $\LL_{n,p^{-r}}$ is the same as choosing $L$ randomly
  according to Haar-Siegel measure on $\LL_{n}$, and then choosing $L'$
  uniformly in $\Lambda_{p,r}(L)$.
\end{prop}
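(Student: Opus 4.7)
The plan is to identify the right-hand side of~\equ{eq: Hecke} as a Borel probability measure $\nu$ on $\LL_{n,p^{-r}}$ that is $\SL_n(\R)$-invariant, and then invoke uniqueness of such a measure to conclude $\nu=\mu_{n,p^{-r}}$. Concretely, I would set
\[
\nu(f) \df \int_{\LL_n} \frac{1}{N} \sum_{L' \in \Lambda_{p,r}(L)} f(L') \, d\mu_n(L), \qquad f \in C_c(\LL_{n,p^{-r}}),
\]
and check three properties: that $\nu$ is well-defined (the integrand is Borel-measurable and bounded), that $\nu$ is $\SL_n(\R)$-invariant, and that $\nu$ has total mass $1$.

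The key technical observation is the equivariance $\Lambda_{p,r}(gL) = g \cdot \Lambda_{p,r}(L)$ for every $g \in \SL_n(\R)$ and every $L \in \LL_n$. This is immediate because left multiplication by $g$ is a $\Z$-linear isomorphism of $\R^n$ sending an inclusion $L \hookrightarrow L'$ to $gL \hookrightarrow gL'$, preserving the quotient as an abelian group. Two consequences follow. First, $|\Lambda_{p,r}(L)|=N$ for every $L \in \LL_n$: writing $L=h\Z^n$ for some $h\in\SL_n(\R)$, we have $\Lambda_{p,r}(L) = h\cdot\Lambda_{p,r}(\Z^n)$. Second, the set $h\Lambda_{p,r}(\Z^n)$ depends only on $L$ and not on the choice of $h\in\SL_n(\R)$ with $h\Z^n=L$, because $\SL_n(\Z)$ permutes $\Lambda_{p,r}(\Z^n)$ (as $\SL_n(\Z)$ stabilizes $\Z^n$); this makes the integrand in $\nu(f)$ well-defined and, together with the local trivialization $L\mapsto h\Lambda_{p,r}(\Z^n)$, continuous in $L$.

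Given the equivariance, $\SL_n(\R)$-invariance of $\nu$ is a straightforward change of variables: re-index the inner sum via $L'\mapsto g^{-1}L'$, using $g^{-1}\Lambda_{p,r}(L) = \Lambda_{p,r}(g^{-1}L)$, and then substitute $\tilde L=g^{-1}L$ in the outer integral, using $\SL_n(\R)$-invariance of $\mu_n$. The total mass equals $\int_{\LL_n} \frac{N}{N}\,d\mu_n = 1$, obtained by monotone approximation of $\mathbf{1}$ by a sequence $f_k\in C_c(\LL_{n,p^{-r}})$ with $0\le f_k\nearrow 1$ (legitimate since $\LL_{n,p^{-r}}$ is $\sigma$-compact). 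The uniqueness of the $\SL_n(\R)$-invariant Borel probability measure on $\LL_{n,p^{-r}}$ (as in~\cite{Raghunathan}) then forces $\nu = \mu_{n,p^{-r}}$, which is exactly~\equ{eq: Hecke}. I do not foresee any serious obstacle; the only care needed is in the bookkeeping of the equivariance and in justifying measurability of the integrand, both handled by the local description above.
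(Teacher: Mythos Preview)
Your proposal is correct and follows essentially the same route as the paper: define $\nu$ by the right-hand side, use the equivariance $\Lambda_{p,r}(gL)=g\Lambda_{p,r}(L)$ to get $\SL_n(\R)$-invariance, check total mass $1$ via monotone approximation, and conclude by uniqueness of the Haar--Siegel measure. The paper's proof is slightly terser (it invokes Riesz to get a Radon measure and omits the well-definedness discussion), but the argument is the same.
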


\begin{proof}
The right-hand side of \equ{eq: Hecke} describes a positive continuous
linear functional on $C_c(\LL_{n, p^{-r}})$, and hence, by the Riesz
representation theorem, is equal to $\int f \, d\nu$ for some Radon measure
$\nu$ on
$\LL_{n, p^{-r}}$. Taking a monotone increasing sequence of compactly
supported functions tending everywhere to 1, we see that \equ{eq:
  Hecke} holds for the function $f \equiv 1$, and
from this it follows that $\nu$ is a probability measure. By the
uniqueness property of Haar-Siegel measure, in order to show that $\nu
= \mu_{n, p^{-r}}$ it suffices to show that $\nu$ is invariant under
left-multiplication by any $g \in \SL_n(\R)$. From the definition of
$\Lambda_{p,r}(L)$ we see that $\Lambda_{p,r}(gL) =
g\Lambda_{p,r}(L)$, and so the invariance of $\nu$ follows from 
the
 following computation:
 \[\begin{split}
\int f \circ g \, d\nu & =  \int_{\LL_n} \frac{1}{N} \sum_{L' \in
  \Lambda_{p,r}(L)} f(g L') \, d\mu_n(L) \\
& =  \int_{\LL_n} \frac{1}{N} \sum_{L'' \in
  \Lambda_{p,r}(gL)} f(L'') \, d\mu_n(L) \\
& =  \int_{\LL_n} \frac{1}{N} \sum_{L'' \in
  \Lambda_{p,r}(gL)} f(L'') \, d\mu_n(gL) = \int f \, d\nu.
     \end{split}\]
  \end{proof}

We now
interpret this in 
terms of the discrete Grassmannian, as
follows. 
For a prime $p$ let $\F_p$ denote the field with $p$ elements. For $r
\in \{1, \ldots, n\}$, let 
$\Gr_{n,r}(\F_p)$ denote the collection of subspaces of
dimension $r$ in $\F_p^n$, or equivalently, the rank-$r$ additive
subgroups of $\F_p^n$. We can identify $\F_p$ with the residues 
$\{0, \ldots, p-1\}$, and thus identify $\F_p^n$ with the quotient
$\Z^n/p\Z^n$. We have a natural {\em reduction mod $p$} homomorphism
$\pi_p: \Z^n \to
\F_p^n$, which sends each coordinate of $x \in \Z^n$ to its class
modulo $p$. Any element $S \in \Gr_{n,r}(\F_p)$ gives rise to a sub-lattice
$\pi_p^{-1}(S) \subset \Z^n$, which contains $p\Z^n$ as a
subgroup of index $p^{r}$, and with $\pi_p^{-1}(S) / p\Z^n$ isomorphic
as an abelian group to $S \cong \prod_1^r \Z/p\Z$. Similarly, for any
$L' \in \Lambda_{p,r}(\Z^n)$ we have  $S
= \pi_p(pL') \cong L'/\Z^n$. This shows that for any lattice $L = g\Z^n$ we have
$$\Lambda_{p,r}(L) = \{p^{-1} g \pi_p^{-1}(S) : S \in
\Gr_{n,r}(\F_p)\}.$$
We have shown:

%particular,
%$$
%N_{p,r,n} = |\Gr_{n,r(\F_p)| = ?? 
%  $$
%  \combarak{insert this number. Not sure we need it. }

\begin{prop}\name{prop: still haar}
Choosing $L'$ according to $\mu_{n, p^{-r}}$ is the same as choosing
$L = g\Z^n$ according to $\mu_n$, then choosing $S \in \Gr_{n,r}(\F_p)$
uniformly and setting $L' = p^{-1} g \pi_p^{-1}(S).$ 
\end{prop}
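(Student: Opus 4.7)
The plan is to combine Proposition~\ref{prop: Hecke} with the explicit bijection between $\Lambda_{p,r}(\Z^n)$ and the discrete Grassmannian $\Gr_{n,r}(\F_p)$ already sketched in the paragraph preceding the statement. The proposition is essentially a repackaging of Proposition~\ref{prop: Hecke}, so the work reduces to verifying that ``uniform over $\Lambda_{p,r}(L)$'' and ``uniform over $\Gr_{n,r}(\F_p)$ via $S \mapsto p^{-1}g\pi_p^{-1}(S)$'' are the same distribution.

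First I would check that the map
$$\Phi \colon \Gr_{n,r}(\F_p) \longrightarrow \Lambda_{p,r}(\Z^n), \qquad S \longmapsto p^{-1}\pi_p^{-1}(S)$$
is a bijection. For $S \in \Gr_{n,r}(\F_p)$, the lattice $\pi_p^{-1}(S) \subset \Z^n$ contains $p\Z^n$ with quotient isomorphic to $S \cong (\Z/p\Z)^r$, so $\Phi(S)$ contains $\Z^n$ with quotient isomorphic to $(\Z/p\Z)^r$, placing it in $\Lambda_{p,r}(\Z^n)$. The inverse is $L' \mapsto \pi_p(pL')$, exactly as noted in the excerpt. Hence the uniform measure on $\Lambda_{p,r}(\Z^n)$ matches the uniform measure on $\Gr_{n,r}(\F_p)$ under $\Phi$.

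Second I would invoke the $\SL_n(\R)$-equivariance $\Lambda_{p,r}(gL) = g\Lambda_{p,r}(L)$ (used already in the proof of Proposition~\ref{prop: Hecke}) to conclude that for $L = g\Z^n$,
$$\Lambda_{p,r}(L) \;=\; g \Lambda_{p,r}(\Z^n) \;=\; \{p^{-1}g\pi_p^{-1}(S) : S \in \Gr_{n,r}(\F_p)\},$$
and that left-multiplication by $g$ sends the uniform measure on $\Lambda_{p,r}(\Z^n)$ to the uniform measure on $\Lambda_{p,r}(L)$. Substituting this into Proposition~\ref{prop: Hecke} gives the claimed two-step sampling procedure for $\mu_{n,p^{-r}}$. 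There is no real obstacle here beyond careful bookkeeping of the $p^{-1}$ rescaling and the identification $\F_p^n \cong \Z^n/p\Z^n$; the statement is a clean reformulation of the Hecke correspondence in linear-algebraic language suitable for the later arguments.
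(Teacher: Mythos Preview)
Your proposal is correct and matches the paper's argument essentially step for step: the paper's proof is the paragraph immediately preceding the proposition (ending ``We have shown:''), which establishes the bijection $S \mapsto p^{-1}\pi_p^{-1}(S)$ with inverse $L' \mapsto \pi_p(pL')$, then uses the equivariance $\Lambda_{p,r}(gL) = g\Lambda_{p,r}(L)$ to obtain $\Lambda_{p,r}(L) = \{p^{-1} g \pi_p^{-1}(S) : S \in \Gr_{n,r}(\F_p)\}$ and appeals to Proposition~\ref{prop: Hecke}.
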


We can state Proposition \ref{prop: still haar} in more concrete terms
as follows. Choose a random lattice $L$
distributed according to $\mu_n$, choose generators $v_1, \ldots, v_n$
of $L$, so that the parallelepiped 
\eq{eq: parallelepiped}{
  \PP_L = \left\{\sum a_i v_i : \forall i, \ 0 \leq a_i < 1 \right \}
  }
is a fundamental
domain for $\R^n/L$. Define the discrete `net' 
\eq{eq: discrete par}{\PPdisc \df \left\{
\sum a_i v_i \in \PP_L:  a_i \in  \left\{ 0, \frac{1}{p}, \ldots, 1-\frac{1}{p}
\right \} \right \}. 
}
These are coset representatives for the inclusion $L \subset
\frac{1}{p} \cdot L$. Choose 
 elements $w_1, \ldots, w_r \in \PPdisc$ from the uniform distribution over linearly independent (as elements of $\F_p^n$)
 $r$-tuples. \comor{changed the phrasing a bit above.}
 Then the
      lattice $L' = \spa_{\Z}\left(v_1, \ldots, v_n, w_1, \ldots,
      w_r\right)$ is a random lattice distributed according to
      $\mu_{n, p^{-r}}$.

\subsection{Some bounds of Rogers and Schmidt}
We now recall some fundamental results of Rogers and
Schmidt. For a
lattice $L \in \LL_n$ let 
$\TT_L \df \R^n/L$ be the quotient torus, let $m_L$ be the Haar probability
measure on $\TT_L$, and let $\pi_L : \R^n \to
\TT_L$ be the quotient map. Let $\Vol ( \cdot)$ denote the Lebesgue measure on
$\R^n$. For a Borel measurable subset $J \subset \R^n$, and a lattice
$L \subset \R^n$, let
$$\vre(J, L) \df 1-m_L\left(\pi_L(J) \right);$$
equivalently, 
$\vre(J, L)$ is the density of points in $\R^n$ not covered by $L
+J$. Also let
\eq{eq: def eta}{
\eta = \eta_n \df \frac{n}{4} \log \left( \frac{27}{16} \right) - 3
\log n. 
}
With these notations, the following was shown in \cite{Rogers_bound}
(see also \cite{Schmidt-admissible}):
\begin{thm}\name{thm: Rogers bound}
  There is a positive constant $\crog$ such that for all $n \in \N$,
  for every Borel measurable $J \subset \R^n$
  with
  $$V \df \Vol(J) \leq \eta$$
  we have
  $$\left|   \int_{\LL_n} \vre(J, L) d\mu_n(L) - e^{-V}
    \right | < \crog \cdot e^{-\eta}.$$

  \end{thm}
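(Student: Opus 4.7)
The plan is to expand $\vre(J,L)$ as a Bonferroni alternating series in the integer-valued random variable $N_L(x) \df \#\{v \in L : x + v \in J\}$, then to average term-by-term over $\LL_n$ using Rogers' higher-moment formulas. Since $\vre(J, L) = \int_{\TT_L} \mathbf{1}[N_L(x) = 0] \, dm_L(\bar x)$, the elementary identity $\mathbf{1}[N=0] = \sum_{k\geq 0} (-1)^k \binom{N}{k}$ (valid for $N \in \Z_{\geq 0}$, with only finitely many non-zero terms) yields
\[
\vre(J,L) = \sum_{k=0}^{\infty} \frac{(-1)^k}{k!}\, M_k(L), \qquad M_k(L) \df \int_{\TT_L} N_L(N_L-1)\cdots(N_L-k+1)\, dm_L.
\]
Unfolding the torus integral rewrites $M_k$, for $k \geq 1$, as the integral over $y \in J$ of the number of ordered $(k-1)$-tuples of distinct nonzero vectors $w_2,\dots,w_k \in L$ with $y + w_i \in J$ for all $i$.

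Next I average against $\mu_n$. The cases $k \leq 2$ are effectively deterministic: $\overline M_0 = 1$, $\overline M_1 = V$ by unfolding, and $\overline M_2 = V^2$ by Siegel's classical mean-value formula applied to the single $L\sm\{0\}$-sum. For $k \geq 3$, Rogers' integration formula applied to the inner $(k-1)$-tuple sum expresses
\[
\overline M_k \df \int_{\LL_n} M_k(L)\, d\mu_n(L) = V^k + E_k,
\]
where the correction $E_k$ is a finite sum indexed by integer linear dependencies among $(w_2,\dots,w_k)$; each summand is an integral of a product of $\mathbf{1}_J$'s along an affine sublattice, weighted by a factor $\nu^{-n}$ with $\nu \in \Z_{\geq 2}$ the absolute value of the determinant associated to the relation. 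Summing the main terms produces $\sum_k (-1)^k V^k / k! = e^{-V}$, so the theorem reduces to the error bound
\[
\left| \int_{\LL_n} \vre(J,L)\, d\mu_n - e^{-V} \right| \leq \sum_{k \geq 3} \frac{|E_k|}{k!} < \crog\, e^{-\eta}.
\]

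The main obstacle, and the content of the work in \cite{Rogers_bound} (see also \cite{Schmidt-admissible}), is to verify this last inequality uniformly in $k$. Two savings conspire: (i) each correction carries a geometric factor bounded at worst by $(\tfrac{16}{27})^{n/4}$, coming from the $\nu^{-n}$ weights together with the volume loss incurred by forcing several $w_i$'s onto a common affine sublattice; (ii) the combinatorial count of partitions and integer sublattices of bounded index contributes only a polynomial factor in $n$, absorbed by the $3\log n$ subtracted in the definition of $\eta$. Under the hypothesis $V \leq \eta$ the remaining $V^k/k!$ factors are summed into at most $e^V \leq e^{\eta}$, and multiplying by the exponential saving gives $\crog e^{-\eta}$. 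The delicate step is (i): making the uniform-in-$k$ bound on the contribution of highly dependent tuples with the sharp constant $27/16$, which is the heart of Rogers' geometric-of-numbers calculation.
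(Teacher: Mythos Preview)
The paper does not prove this theorem at all: it is quoted as a known result, with the attribution ``the following was shown in \cite{Rogers_bound} (see also \cite{Schmidt-admissible})'', and is then used as a black box via Corollary~\ref{cor: Rogers bound}. So there is no in-paper proof to compare your proposal against.

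That said, your outline is a faithful sketch of Rogers' original argument: the Bonferroni expansion of $\mathbf{1}[N_L(x)=0]$, the unfolding of the factorial moments $M_k$, the identification of the main term $\sum_k (-1)^k V^k/k! = e^{-V}$ via Siegel/Rogers mean-value formulas, and the control of the error $\sum_{k\ge 3}|E_k|/k!$ by the $\nu^{-n}$ weights from Rogers' higher-moment formula together with a polynomial-in-$n$ combinatorial count. You correctly flag that the real work---and the source of the constant $27/16$ in the definition of $\eta$---is the uniform-in-$k$ bound on the dependent-tuple contributions, and you correctly defer that to \cite{Rogers_bound}. One small caveat: the clean identities $\overline{M_1}=V$ and $\overline{M_2}=V^2$ (and indeed Rogers' $k$-point formula) require $n$ large enough for the relevant integrals to converge; this is harmless here since the constant $\crog$ absorbs finitely many small-$n$ cases. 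As a proof sketch this is fine, but be aware that in the context of the present paper the theorem is simply imported, not reproved.
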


  Using the Markov inequality, this immediately implies the following:

  \begin{cor}\name{cor: Rogers bound}
    With the same notation and assumptions, for any $\kappa>0$, 
    \eq{eq: conclusion chebyshev}{
      \mu_n
      % {n,c}
      \left( \left\{ L \in \LL_n: \vre(J, L) > \kappa 
  \right \} \right) < \frac{1}{\kappa} \left(e^{-V}
  % \frac{V}{c}}
  + \crog \,
  %n^3  \left(
  %  \frac{16}{27} \right)^{\frac{n}{4}}
e^{-\eta}
  \right) . 
}
\end{cor}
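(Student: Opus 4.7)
The plan is to deduce this as a direct application of Markov's inequality to the non-negative random variable $L \mapsto \vre(J,L)$ on the probability space $(\LL_n, \mu_n)$. There is essentially no obstacle; the work is already done in Theorem~\ref{thm: Rogers bound}.

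First, I would observe that $\vre(J,L) = 1 - m_L(\pi_L(J))$ is non-negative, since $m_L$ is a probability measure and hence $m_L(\pi_L(J)) \leq 1$. Consequently, $\vre(J,L)$ is a non-negative measurable function on $\LL_n$, and Markov's inequality applies: for any $\kappa > 0$,
\[
\mu_n\left(\left\{L \in \LL_n : \vre(J,L) > \kappa\right\}\right) \leq \frac{1}{\kappa} \int_{\LL_n} \vre(J,L)\, d\mu_n(L).
\]

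Next, I would upper bound the integral on the right-hand side. By Theorem~\ref{thm: Rogers bound}, under the hypothesis $V = \Vol(J) \leq \eta$, one has $\left|\int_{\LL_n} \vre(J,L)\, d\mu_n(L) - e^{-V}\right| < \crog \cdot e^{-\eta}$, which in particular implies
\[
\int_{\LL_n} \vre(J,L)\, d\mu_n(L) < e^{-V} + \crog \cdot e^{-\eta}.
\]

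Substituting this bound into the Markov estimate yields precisely \equ{eq: conclusion chebyshev}, completing the proof. The only subtlety worth noting in the write-up is the measurability of $L \mapsto \vre(J,L)$, which follows from the continuity of the map $L \mapsto m_L(\pi_L(J))$ for nice enough $J$ (or can be handled by a standard monotone class argument), but this is implicit in the fact that the integral in Theorem~\ref{thm: Rogers bound} is well-defined.
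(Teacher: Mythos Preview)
Your proposal is correct and matches the paper's approach exactly: the paper simply states that the corollary follows from Theorem~\ref{thm: Rogers bound} by the Markov inequality, and that is precisely what you have written out.
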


%\begin{proof}
%  The case $c=1$ of \equ{eq: conclusion chebyshev} follows from Theorem
%  \ref{thm: Rogers bound} via the Chebyshev inequality.
  %and the
  % definition of $\eta$.
%  Since $\vre(J, L) = \vre(tL, tJ)$ for
%  any $t>0$, we can apply a rescaling to
%deduce \equ{eq: conclusion chebyshev} from its special case $c=1$. 
%  \end{proof}

\subsection{From half covering to full covering.}
Here we show the standard fact (cf. \cite[Lemma 4]{Rogers_again} or \cite{HLR09}) that if a convex body 
\comor{I changed "body" to "convex body"} covers half the space, then dilating it by a factor 2 covers all of space. 
Notice that this translates to a factor $2^n$ in volume, as a result of which we will only use this lemma
for very small bodies. \combarak{Oded, isn't there a reference for this? You told me it is in one of your papers. Or was it only written there for the Euclidean ball?}\comoded{We have it in~\cite{HLR09} for all $\ell_p$ balls, so it's not quite the same. I bet it's pretty standard, but I was unable to find a good reference. Up to you... }
%to transform a continuous covering problem to a discrete one. 
%A similar discretization idea was employed by Rogers, see 
%The Lemma was essentially proved by Rogers but not stated explicitly,
%\cite[Proof of
%Lemma 4]{Rogers_again}. 

\begin{lem}\name{lem: half to full}
Let $\KK \in \Convn$ and let $L$ be a lattice in $\R^n$.
Suppose that
\[
m_{L}(\pi_{L}(\KK))>\frac{1}{2}.
\]
Then we have
\[
L + 2\KK = \R^n.
\]
\end{lem}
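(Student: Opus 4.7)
The plan is the standard ``difference set'' argument for covering from half-covering, which exploits the symmetry of Haar measure on the torus under negation together with the fact that $\KK + \KK = 2\KK$ whenever $\KK$ is convex.

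First I would observe that since the map $y \mapsto -y$ preserves the Haar probability measure $m_L$ on $\TT_L$, we have
\[
m_L\bigl(\pi_L(-\KK)\bigr) = m_L\bigl(-\pi_L(\KK)\bigr) = m_L\bigl(\pi_L(\KK)\bigr) > \tfrac12.
\]
Next, fix an arbitrary $x \in \R^n$. By translation invariance of $m_L$, the set $\pi_L(x - \KK) = \pi_L(x) - \pi_L(\KK)$ also has $m_L$-measure strictly greater than $1/2$. Two Borel subsets of $\TT_L$ whose measures sum to more than $1$ cannot be disjoint, so
\[
\pi_L(\KK) \cap \pi_L(x - \KK) \neq \emptyset.
\]

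Unpacking this intersection, there exist $k_1, k_2 \in \KK$ with $\pi_L(k_1) = \pi_L(x - k_2)$, i.e., $k_1 + k_2 - x \in L$. Finally, by convexity of $\KK$, the midpoint $(k_1 + k_2)/2$ lies in $\KK$, so $k_1 + k_2 \in 2\KK$. Hence $x \in 2\KK - L = L + 2\KK$, where we used that $L$ is a subgroup. Since $x \in \R^n$ was arbitrary, this establishes $L + 2\KK = \R^n$.

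There is no real obstacle here; the only points where one should be a little careful are invoking the right invariance properties of $m_L$ (translation and negation) and the use of convexity to pass from $\KK + \KK$ to $2\KK$, which fails for general sets. Everything else is a one-line verification.
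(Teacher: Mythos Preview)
Your proof is correct and follows essentially the same approach as the paper: both use negation-invariance and translation-invariance of $m_L$ to force $\pi_L(\KK)$ and a translate of $\pi_L(-\KK)$ to intersect, then invoke convexity to conclude $k_1+k_2\in 2\KK$. The only cosmetic difference is that the paper phrases the argument with $x$ ranging over $\TT_L$ rather than over $\R^n$.
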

\begin{proof}
Since 
\[
  m_{L}\left(\pi_{L}\left(\KK \right)\right) 
  = 
  m_{L}\left(\pi_{L} \left( -\KK \right) \right)
  >
  \frac{1}{2} \; ,
\]
we have that for any $x \in \TT_{L}$,
\[
m_{L}\left( \left(\pi_{L}\left(\KK \right) -x \right) \cap
  \left(\pi_{L} \left(-\KK \right) \right) \right) > 0 \; .
\]
Therefore, there are $z_1, z_2 \in \pi_{L}(\KK)$ so that $z_1 -x=
-z_2$, or equivalently, there are $y_1, y_2 \in \KK$ so that  
\[
 x =  \pi_{L}\left( y_1 \right) +
\pi_{L}\left( y_2\right) = 
 \pi_{L}\left(  y_1 + y_2\right).
 \]
The claim now follows from $y_1+y_2 \in 2\KK$.
\end{proof}

\ignore{
\begin{lem}\name{lem: cover discrete}
Let $\KK \in \Convn$ and let $L_1 \subset L'$ be two lattices in $ \R^n$
with $[L': L] = 
m$.
%, and
%let 
%$\pi_L : \R^n \to \TT_L , \, \pi_{L'}: \R^n \to \TT_{L'}$ be as
%above.
Suppose that for
some $u_0 \in \R^n$ and some $r>0$ we have that
\eq{eq: covers discrete part}{
  \pi_L(u_0 + L')  \subset
  \pi_L(\KK),}
and that
\eq{eq: projection condition}{
m_{L}(\pi_{L}(r \cdot \KK))>1-\frac{1}{2m}.
}
Then the dilate
\eq{eq: def K'}{
\KK' \df \left(1 + 2r \right)\KK
}
satisfies
\eq{eq: validity}{
L + \KK' = \R^n.
}
\end{lem}

A similar discretization idea was employed by Rogers, see 
%The Lemma was essentially proved by Rogers but not stated explicitly,
\cite[Proof of
Lemma 4]{Rogers_again}. 

\begin{proof}
  Replacing $\KK$ by a
translate will not affect the validity of \equ{eq: validity}, so we
can apply a translation to assume $ 0 \in \KK$, and hence $\KK \subset
\KK'$. 
Let %$L_1 \df \frac{1}{p} L, \ \
$$\KK_1 \df r \cdot \KK.$$
%and let
%$\pi_{L_1}:\R^n \to \TT_{L_1}, \ 
%m_{L_1}$ be as above.
The projection $\pi_{L'}$ is the composition of $\pi_{L}$ with a map
which is $p^2:1$. 
Therefore \equ{eq: projection condition} %and $[L':L] = k$
implies
$$m_{L'}\left(\pi_{L'}\left(\KK_1 \right)\right) = m_{L'}\left(\pi_{L'}
  \left( -\KK_1 \right) \right)> \frac{1}{2}.$$
For any $x' \in \TT_{L'}$ we therefore have
$$
m_{L'}\left( \left(\pi_{L'}\left(\KK_1 \right) -x' \right) \cap
  \left(\pi_{L'} \left(-\KK_1 \right) \right) \right)  > 0, 
$$
so there are $z_1, z_2 \in \KK_1$ so that $\pi_{L'}(z_1) -x'=
\pi_{L'}(z_2)$, or equivalently, there are $y_1, y_2 \in \KK$ so that  
\eq{eq: ys satisfy}{
 x' =  \pi_{L'}\left(r  y_1 \right) +
\pi_{L'}\left(r y_2\right).}
Now let $x \in \TT_L$, and let $x' = \pi_{L'}(\pi_L^{-1}(x))$ (this is
well-defined since $L \subset L'$). Let $y_1, y_2 \in \KK$ satisfy \equ{eq: ys
  satisfy}, so that there is $\lambda \in L'$ such that
$$
x  = \pi_L\left( r y_1 \right) + \pi_L\left(r
  y_2 \right) + \pi_L (\lambda).
$$
% Since $\pi_L(L_1) = \PPdisc$, we obtain from
From \equ{eq: covers discrete
  part} that $\pi_L(\lambda)  = \pi_L(y_3) - u_0$ for some
$y_3 \in \KK$. Set
$$
t \df 1 + 2r \ \text{ and } y'_i = ty_i \in \KK',
$$
and
$$
y \df r y_1 +  r y_2 + y_3 = \frac{r}{t} \,
y'_1 + \frac{r}{p} \, y'_2 + \frac{1}{t} \, y'_3.
$$
Since this is a convex combination of elements of $\KK'$, we have
found $y
\in \KK'$ with $\pi_L(y) = x+u_0$. Since $x$ was arbitrary, this shows
that $\pi_L(\KK') = \TT_L$, and this implies \equ{eq: validity}. 
\end{proof}
}

\subsection{Lower bound on the size of a discrete \texorpdfstring{$\vre$}{eps}-Kakeya
  set}\name{subsec: Kakeya}
Now let $q$ be a power of a prime, let $\F_q$ denote the field with
$q$ elements and for a line $\ell 
\in \Gr_{n,1}(\F_q)$, let $x + \ell$ denote the affine line through
$x$ parallel to $\ell$.  
A subset $K\subset \F_q^n$ is called a {\em Kakeya set} if for
every $\ell \in \Gr_{n,1}(\F_q)$ there is $x \in \F_q^n$ such that
$x + \ell  \subset K;
$
that is, $K$ contains a line in every direction. For $\vre \in (0,1]$, $K$ is
called an {$\vre$-Kakeya set} if
$$
\left|\left\{\ell  \in \Gr_{n,1}(\F_q): \exists x \text{ s.t. } x +
\ell \subset K
\right\} \right|  \geq \vre \, |\Gr_{n,1}(\F_q)|;
$$
that is $K$ contains a line in at least an $\vre$-proportion of
directions.
Extending this notion to higher dimensions, let $\vre \in (0,1]$ and
$r \in \{1, \ldots, 
n -1 \}$. Then a set  $K \subset \F_q^n$ is called a {\em Kakeya set of
  rank $r$} if for any $S \in \Gr_{n,r}(\F_q)$ there is $x \in \F_q^n$
such that $x + S \subset K$, and an {\em $\vre$-Kakeya set of rank
  $r$} if
$$
\left|\left\{S  \in \Gr_{n,r}(\F_q): \exists x \text{ s.t. } x +
S \subset K
\right\} \right|  \geq \vre \, |\Gr_{n,r}(\F_q)|. 
$$

%The following fundamental bound on the minimal cardinality of  Kakeya
%sets was established in~\cite{Dvir, DKSS, KLSS}. \comor{Consided removing this theorem}

%\begin{thm}\name{thm: bound Kakeya}
%  If $K \subset \F_q^n$ is a Kakeya set of rank $r$ then
%   $$|K| \geq \left(1 +
%   (q-1)q^{-r} \right)^{-n} q^n.$$ 
%  \end{thm}
%Building on these results we obtain the following bound for
%$\vre$-Kakeya sets:
%\begin{thm}\name{thm: bound eps Kakeya}
%Let $\vre \in (0,1)$ and let $K \subset \F_q^n$ be an
%$\vre$-Kakeya set of rank $r$. Then
%$$
%|K| \geq \frac{\vre}{\vre + \log |\Gr_{n,r}(\F_q)|} f(q,r,n) q^n.
%$$
%\end{thm}

In this subsection we will derive lower bounds on the size of an $\vre$-Kakeya set of rank $r$. Our main observation is that the possible sizes of an $\vre$-Kakeya set and a $\delta$-Kakeya set are related as follows.
\combarak{Rearranged this section as Or suggested, removing the multiple versions of Lemma \ref{lem:weakdeltabound}, once with $\delta=1$ and once with general $\delta$.}
\begin{lem}\name{lem:epstodelta}
Let $0<\vre<\delta<1$. Assume $K\subset \F_q^n$ is an $\vre$-Kakeya set of rank $r$, then there exists a $\delta$-Kakeya set $\AAA\subset \F_q^n$ of rank $r$ with cardinality
$$|\AAA|\leq \left\lceil\frac{\log(1-\delta)}{\log(1-\vre)} \right\rceil|K|.$$
\end{lem}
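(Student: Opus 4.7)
The key observation is that translation of $K$ does not change the set of ``covered directions'', but the action of $\GL_n(\F_q)$ on $\F_q^n$ does. Specifically, let
\[
D(K) \df \{S \in \Gr_{n,r}(\F_q) : \exists x \in \F_q^n \text{ with } x + S \subset K\},
\]
so the hypothesis reads $|D(K)| \geq \vre\,|\Gr_{n,r}(\F_q)|$. For any $g \in \GL_n(\F_q)$, a direct computation gives $D(gK) = g\, D(K)$ (since $x + S \subset gK$ iff $g^{-1}x + g^{-1}S \subset K$). Hence if we set $\AAA \df \bigcup_{i=1}^k g_i K$ for some $g_1,\ldots,g_k \in \GL_n(\F_q)$, then
\[
D(\AAA) \supseteq \bigcup_{i=1}^k g_i \, D(K),
\]
and $|\AAA| \leq k\,|K|$.

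The plan is to find, by a probabilistic argument, a choice of $g_1,\ldots,g_k$ with small $k$ such that $\bigcup_{i=1}^k g_i D(K)$ covers a $\delta$-fraction of $\Gr_{n,r}(\F_q)$. Since $\GL_n(\F_q)$ acts transitively on $\Gr_{n,r}(\F_q)$, for any fixed $S \in \Gr_{n,r}(\F_q)$ and $g$ chosen uniformly from $\GL_n(\F_q)$, the element $g^{-1}S$ is uniform on $\Gr_{n,r}(\F_q)$, and therefore
\[
\Pr\left[S \notin g\, D(K)\right] \;=\; 1 - \frac{|D(K)|}{|\Gr_{n,r}(\F_q)|} \;\leq\; 1 - \vre.
\]
Choosing $g_1,\ldots,g_k$ independently and uniformly, independence yields
\[
\Pr\left[S \notin \bigcup_{i=1}^k g_i D(K)\right] \;\leq\; (1-\vre)^k.
\]

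Summing over $S \in \Gr_{n,r}(\F_q)$ and using linearity of expectation, the expected number of directions not covered by $\bigcup_i g_i D(K)$ is at most $(1-\vre)^k\,|\Gr_{n,r}(\F_q)|$. Equivalently, the expected fraction of covered directions is at least $1 - (1-\vre)^k$. Taking $k \df \lceil \log(1-\delta)/\log(1-\vre)\rceil$ makes this expectation at least $\delta$ (the inequality flips since both logarithms are negative), so there must exist a deterministic choice of $g_1,\ldots,g_k$ for which $|D(\AAA)| \geq \delta\,|\Gr_{n,r}(\F_q)|$. Then $\AAA$ is the desired $\delta$-Kakeya set of rank $r$ with $|\AAA| \leq k|K|$, which is exactly the bound in the statement.

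There is no serious obstacle here: the only input required beyond elementary probability is the transitivity of the $\GL_n(\F_q)$-action on $\Gr_{n,r}(\F_q)$, and the observation that $\GL_n(\F_q)$ moves ``directions'' (unlike translations, which fix them). The main thing to be careful about is the sign when solving $(1-\vre)^k \leq 1-\delta$ for $k$, and the fact that one only needs the union $D(\AAA) \supseteq \bigcup_i g_i D(K)$ rather than equality, so that extra coverage coming from the union structure of $\AAA$ is a bonus.
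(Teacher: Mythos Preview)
Your proof is correct and follows essentially the same approach as the paper: define the set of covered directions, use the $\GL_n(\F_q)$-action and its transitivity on $\Gr_{n,r}(\F_q)$, take $\AAA$ to be a union of $k=\lceil \log(1-\delta)/\log(1-\vre)\rceil$ random translates $g_iK$, and apply linearity of expectation to find a deterministic choice. The paper's argument and yours are nearly identical in structure and detail.
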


\begin{proof}
	Fix $n
	\in
	\N$ and $r
	\in \{1, \ldots, n-1\}$. For $K \subset \F_q^n$, denote
	$$
	\BB_K \df \{S \in \Gr_{n,r}(\F_q) : K \text{ contains a translate of }
	S \}.
	$$
	Let $g$ be an element of $\GL_n(\F_q)$, that is an invertible
	$n\times n $ matrix
	with entries in $\F_q$. For $S \in \Gr_{n,r}(\F_q),$ we clearly have
	$S \in \BB_K$ if and only if $gS \in \BB_{gK}$. 
	Let $\NN$ be a finite subset of $\GL_n(\F_q)$ and consider 
	% \begin{align}
	$$
	\AAA = \AAA(\NN , K) \df \bigcup_{g \in \NN} g K.
	$$
	% \end{align}
	Clearly
	$$|\AAA| \leq
	|\NN|\cdot|K| \ \ \text{ and } 
	% \begin{align}
	\bigcup_{g \in \NN} g \BB_K   \subset \BB_{\AAA}.
	$$
	%   \end{align}
	%Thus, to establish our claim, it remains to show that we can a find a subset $\NN\subset \GL_n(\F_q)$ of cardinality $|\NN|\leq \left\lceil\frac{\log(1-\delta)}{\log(1-\vre)} \right\rceil$, for which $|\BB_{\AAA}|\geq\delta \Gr_{n,r}(\F_q)$.
%	
%	
%	Now, suppose we have chosen $\NN$ so that $\AAA = \AAA(\NN, K)$
%	satisfies 
%	$\BB_{\AAA} \geq \delta \Gr_{n,r}(\F_q)$, i.e., $\AAA$ is a Kakeya set of rank $r$. 
%	Then by Theorem~\ref{thm: bound Kakeya} we have 
%	$$
%	|\AAA| \geq f(q,r,n)\cdot q^n, 
%	$$
%	and hence 
%	%     \begin{align}
%	\eq{eq: and hence}{
%		|K|\geq \frac{f(q,r,n)}{|\NN|}q^n.
%	}
%	%   \end{align}
Recall that by definition of an $\vre$-Kakeya set of rank $r$, we have that $|\BB_K|\geq \vre |\Gr_{n,r}(\F_q)|$.
	Consequently, our claim will follow once we show that if
	\eq{eq: if}{
		\BB \subset\Gr_{n,r}(\F_q) \text{ satisfies } |\BB|\geq \vre \,
		|\Gr_{n,r}(\F_q)|}
	then
	\eq{eq: then}{
		\exists \,  \NN\subset\GL_n(\F_q) \text { s.t. } |\NN| \leq 
		\left\lceil\frac{\log(1-\delta)}{\log(1-\vre)} \right\rceil 
		\text{ and } 
		\Big|\bigcup_{g \in \NN} g \BB \Big| \geq \delta |\Gr_{n,r}(\F_q)|.}   
	
	We will prove this using a standard probabilistic argument. 
	Define a probability space by drawing $N=\left\lceil\frac{\log(1-\delta)}{\log(1-\vre)} \right\rceil$ elements $g_1, \ldots, g_N$ of
	$\GL_n(\F_q)$, uniformly 
	and independently.
	%Note that $|\GL_n(\F_q)| = \prod_{i=0}^{n-1} (q^n -
	%q^i)$ \combarak{do we need this?  If so move to where it is
	%     used.}.
	Fix $\BB$ as in \equ{eq: if}  and for each $S \in \Gr_{n,r}(\F_q)$, denote by 
	$E^i_S$ the event that $S \notin g_i \BB$.
	The events $\{E^i_{S} : i=1, \ldots, N \}$ are i.i.d., since
	the $g_i$ are. Therefore 
	$$E_S  \df \bigcap_{i=1}^N E^i_S ,$$
	satisfies 
	$$
	%\begin{align}
	\prob\left(E_{S}\right) %\leq \prob\left(\bigcap_{i=1}^{N}
	%       E^i_{S}\right)
	=\left(\prob(E^i_{S})\right)^N.
	$$
	%\end{align}
	Since $\GL_n(\F_q)$ acts transitively on
	$\Gr_{n,r}(\F_q)$, 
	%     \begin{align}
	$$
	\prob\left(E^i_S \right) = \prob\left(g_i^{-1} S \notin
	\BB\right)=1-\frac{|\BB|}{|\Gr_{n,r}(\F_q)|}\leq 1-\vre, 
	$$
	%\end{align}
	which implies
	%\begin{align}
	\eq{eq: which implies}{
		\prob\left(E_{S} \right)\leq \left(1-\vre\right)^N %=e^{N\log \left(1-\vre\right)}
		\leq 1-\delta.
	}
It therefore follows that
\[
\mathbb{E}\Big|\bigcup_{i=1}^N g_i \BB \Big|
=
\sum_{S\in\Gr_{n,r}(\F_q)}\left(1-\prob\left(E_{S} \right)\right)
\geq 
\delta |\Gr_{n,r}(\F_q)|.
\]
%
%	%\end{align}
%	By the union bound, 
%	% \begin{align}
%	$$
%	\prob\left(\bigcup_{S\in\Gr_{n,r}(\F_q)}E_{S}\right)\leq
%	|\Gr_{n,r}(\F_q)| \, e^{-N\vre} , %\leq e^{rn\log q -N\vre}, 
%	$$
%	%\end{align}
%	and this probability is smaller than $1$ whenever $N > \frac{\log|\Gr_{n,r}(\F_q)|}{\vre }$. 
	This implies that there exists a subset $\NN 
	\df \{g_1, \ldots, g_N\}$ that
	satisfies \equ{eq: then}.
\end{proof}

%The following fundamental bound on the minimal cardinality of  Kakeya
In~\cite{Dvir, DKSS, KLSS}, a fundamental lower bound on the minimal cardinality of  Kakeya sets was established. We will need 
%A straightforward adapation of the method leads to the 
%By closely following the proofs of those papers, we obtain 
the following variant, whose special case $\delta=1$ was proved in \cite{KLSS}: 

%losely the . \comor{Consided removing this theorem}
%Next, we closely follow the proofs from \cite{Dvir, DKSS, KLSS}, to derive a lower bound on the cardinality of $\delta$-Kakeya sets. This bound is quite tight for large $\delta$, but is loose for $\delta\ll 1$. We then leverage Lemma~\ref{lem:epstodelta} to obtain a much sharper bound for small $\delta$.

\begin{lem}\name{lem:weakdeltabound}
Let $\delta\in(0,1]$. If $K \subset \F_q^n$ is a $\delta$-Kakeya set of rank $r$ then
$$|K| \geq \left(1 +
\frac{(q-1)q^{-r}}{\delta} \right)^{-n}q^n .$$ 
\end{lem}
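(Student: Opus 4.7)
The plan is to reduce the general $\delta \in (0,1]$ statement to the $\delta = 1$ case (which is \cite{KLSS}) by replacing $K$ with a union of $N$ random rotates that collectively form a full Kakeya set of rank $r$. This is exactly the probabilistic device used in the proof of Lemma~\ref{lem:epstodelta}, but pushed all the way from density $\delta$ up to density $1$.

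Given a $\delta$-Kakeya set $K$ of rank $r$, I will draw $g_1, \ldots, g_N$ uniformly and independently from $\GL_n(\F_q)$. The transitivity of $\GL_n(\F_q)$ on $\Gr_{n,r}(\F_q)$ gives, for each $S \in \Gr_{n,r}(\F_q)$, that the probability no $g_i K$ contains a translate of $S$ equals $(1-\delta)^N$. Hence the expected number of $S$ left uncovered by $\AAA := \bigcup_i g_i K$ is at most $(1-\delta)^N |\Gr_{n,r}(\F_q)|$. Choosing $N$ so that this expectation is strictly less than $1$, some realization makes $\AAA$ a full Kakeya set of rank $r$ with $|\AAA| \leq N |K|$. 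Setting $a := (q-1)q^{-r}$ and applying the $\delta=1$ bound of \cite{KLSS} to $\AAA$ then yields $|K| \geq N^{-1}(1+a)^{-n} q^n$.

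To close the argument, I need $N$ satisfying simultaneously (i) $(1-\delta)^N |\Gr_{n,r}(\F_q)| < 1$ and (ii) $N \leq \bigl((1+a/\delta)/(1+a)\bigr)^n = \bigl(1 + a(1-\delta)/(\delta(1+a))\bigr)^n$; note that (ii) is precisely the inequality that upgrades the bound above to the desired $(1+a/\delta)^{-n} q^n$. Using the crude estimate $|\Gr_{n,r}(\F_q)| \leq q^{r(n-r)}$ together with $-\log(1-\delta) \geq \delta$, condition (i) is met by an $N$ polynomial in $n$, $r$ and $1/\delta$, whereas the right-hand side of (ii) grows exponentially in $n$. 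The main technical obstacle is checking that the two constraints are compatible: using $\log(1+x) \geq x/(1+x)$, this reduces to verifying that $n a (1-\delta)/(\delta + a)$ dominates $\log\bigl(r(n-r)\log q / \delta\bigr)$, which I expect to hold comfortably in the parameter regime relevant for the rank-$2$ application in Corollary~\ref{cor: bound eps Kakeya}.
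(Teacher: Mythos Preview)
Your bootstrapping strategy does not close. The two constraints you write down are genuinely incompatible in almost every regime, including the one you single out. To cover \emph{all} of $\Gr_{n,r}(\F_q)$ you need $N$ with $(1-\delta)^N |\Gr_{n,r}(\F_q)| < 1$, i.e., $N \gtrsim \log|\Gr_{n,r}(\F_q)| / \delta$, whereas to recover the stated bound you need $N \leq \bigl((1+a/\delta)/(1+a)\bigr)^n$ with $a=(q-1)q^{-r}$. Already for $n=2$, $r=1$, $\delta=1/2$ the second constraint forces $N\leq\bigl((3q-2)/(2q-1)\bigr)^2<9/4$, hence $N\leq 2$, while the first requires $N>\log_2(q+1)$; this fails for every $q\geq 4$. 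In the paper's own regime ($r=2$, $q\in[n,2n]$, $\delta=1/2$) one has $a\approx 1/n$, so the right-hand side of (ii) is $\approx(1+1/n)^n\approx e$, forcing $N\leq 2$, yet $|\Gr_{n,2}(\F_q)|\approx q^{2n-3}$ forces $N\gtrsim n\log n$. The exponential growth you were counting on in (ii) is illusory here because $a$ itself shrinks like $1/n$.

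The paper therefore does \emph{not} reduce to the $\delta=1$ case of \cite{KLSS}; it reruns the polynomial method with $\delta$ built in. One chooses parameters $m,k$ with $k<\delta q^r\lceil(qm-k)/(q-1)\rceil$, interpolates a polynomial $P$ of degree $\leq k$ vanishing to multiplicity $m$ on $K$, and then uses that the set $D\subset(\F_q^n)^r$ of ``good'' $r$-tuples has $|D|\geq\delta q^{nr}$. The factor $\delta$ enters exactly once, in the final application of the multiplicity Schwartz--Zippel lemma to the homogeneous part $P_H$ over the field $\F_q(T_1,\ldots,T_r)$: the degree bound $\deg P_H\leq k<\delta q^r\ell=\delta|S|\ell$ is what forces $P_H=0$. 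This is why the $\delta$ appears linearly in the denominator inside the base $(1+(q-1)q^{-r}/\delta)$ rather than as a multiplicative prefactor --- a sharpness your union-of-rotates argument cannot achieve, since it necessarily pays a factor of $N$ outside the $n$th power. The probabilistic device you describe is precisely what the paper uses in Lemma~\ref{lem:epstodelta}, but only to go from a small $\vre$ up to a moderate $\delta$ (e.g., $\delta=1/2$); pushing all the way to $\delta=1$ is where it breaks.
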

The proof follows with minor adaptations from the arguments of \cite{KLSS}. We give the details in 
%This lemma is proved in 
Appendix \ref{appendix A}. 

The bound in Lemma \ref{lem:weakdeltabound} is quite tight for large $\delta$, but is loose for $\delta\ll 1$. We now leverage Lemma~\ref{lem:epstodelta} to obtain a much sharper bound for small $\delta$. It 
%As a corollary of the two previous results, we obtain the following. \combarak{I suggest adding the following at the end of the sentence "the following result, which 
replaces the exponential (in $n$, with $q,r$ fixed) dependence on $\delta$ with a linear dependence.
We remark that the bound \eqref{eq:r1} will not be used in this paper, and is included for future reference.
\begin{thm}\name{thm: bound eps Kakeya}
Let $\vre\in(0,1)$. If $K\subset\F_q^n$ is an $\vre$-Kakeya set of rank $r$, then 
\eq{eq:generalr}{|K|>\vre \left(1 +2
(q-1)q^{-r}\right)^{-n} q^n,}
and for $r=1$ we also have that
\eq{eq:r1}{
|K|>\vre \frac{e^{-1}}{\log(2e n)}2^{-n} q^n.}
\end{thm}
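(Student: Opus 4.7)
The plan is to derive both bounds by combining Lemma~\ref{lem:epstodelta} (the $\vre$-to-$\delta$ conversion) with Lemma~\ref{lem:weakdeltabound} (or a sharper rank-$1$ analogue), choosing the auxiliary parameter $\delta$ to optimize the final constant. The key conceptual point is that the conversion in Lemma~\ref{lem:epstodelta} costs only a factor linear in $1/\vre$, so any constant-$\delta$ lower bound automatically yields a bound with linear $\vre$-dependence.

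For \equ{eq:generalr}, I would take $\delta = 1/2$ in Lemma~\ref{lem:epstodelta}. This produces a $(1/2)$-Kakeya set $\AAA\subset \F_q^n$ of rank $r$ with $|\AAA|\leq N|K|$, where $N = \lceil \log 2 / |\log(1-\vre)|\rceil$, while Lemma~\ref{lem:weakdeltabound} applied to $\AAA$ gives $|\AAA| \geq (1+2(q-1)q^{-r})^{-n}q^n$. Combining,
\[
|K| \geq \frac{1}{N}\left(1+2(q-1)q^{-r}\right)^{-n} q^n,
\]
and it suffices to verify $N < 1/\vre$ for every $\vre \in (0,1)$. Setting $k = \lceil 1/\vre\rceil \geq 2$ (so $\vre \geq 1/k$), this reduces to showing $(1-\vre)^{k-1}\leq 1/2$, which follows from the elementary inequality $(1-1/k)^{k-1}\leq 1/2$ for all integers $k\geq 2$ (equality at $k=2$, monotonically decreasing to $1/e$). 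Then $N\leq k-1 < 1/\vre$, yielding the desired strict inequality.

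For \equ{eq:r1}, I would replace Lemma~\ref{lem:weakdeltabound} by a sharper rank-$1$ estimate, worked out in Appendix~\ref{appendix A} along the lines of the multiplicity method of \cite{KLSS}, of the form $|\AAA| \geq q^n/(2^n \log(2en))$ (with appropriate constant) for a full rank-$1$ Kakeya set $\AAA$. Applying Lemma~\ref{lem:epstodelta} with a suitable $\delta$ close to $1$ so that the base remains near $2^{-n}$, together with the same ceiling-function analysis as above, yields \equ{eq:r1} with the prefactor $e^{-1}/\log(2en)$.

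The main obstacle is bookkeeping: extracting the precise rank-$1$ multiplicity-method bound with the correct $\log(2en)$ logarithmic dependence, and handling the ceiling functions carefully to obtain strict inequality uniformly in $\vre\in(0,1)$. Modulo these constant-tracking calculations, the argument is a direct combination of the two preceding lemmas, with the linear $\vre$-dependence arising entirely from the first-order behavior $-\log(1-\vre)\geq \vre$ used inside Lemma~\ref{lem:epstodelta}.
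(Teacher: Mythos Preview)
Your argument for \equ{eq:generalr} is essentially the paper's: combine Lemma~\ref{lem:epstodelta} at $\delta=\tfrac12$ with Lemma~\ref{lem:weakdeltabound}, then bound the ceiling by $1/\vre$. One small gap: Lemma~\ref{lem:epstodelta} is stated only for $\vre<\delta$, so your uniform claim ``for every $\vre\in(0,1)$'' does not literally follow from it when $\vre\geq\tfrac12$. The paper handles that range by applying Lemma~\ref{lem:weakdeltabound} directly with $\delta=\vre$ (giving $|K|\geq(1+2(q-1)q^{-r})^{-n}q^n>\vre(1+2(q-1)q^{-r})^{-n}q^n$ since $\vre<1$). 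Your ceiling estimate via $(1-1/k)^{k-1}\leq\tfrac12$ is a valid alternative to the paper's cruder $\lceil x\rceil<x+1$ together with $-\log(1-\vre)>\vre$.

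Your plan for \equ{eq:r1}, however, misidentifies where the factor $\log(2en)$ comes from. You posit a ``sharper rank-$1$ estimate'' for full Kakeya sets of the form $|\AAA|\geq q^n/(2^n\log(2en))$, to be extracted from Appendix~\ref{appendix A}. No such bound is there (Appendix~\ref{appendix A} proves only Lemma~\ref{lem:weakdeltabound}), and in fact Lemma~\ref{lem:weakdeltabound} with $\delta=1$ already gives the \emph{stronger} $|\AAA|\geq(2-1/q)^{-n}q^n\geq 2^{-n}q^n$, so your proposed estimate is not sharper but weaker. The paper uses no new Kakeya input for \equ{eq:r1}: it simply applies the same combined inequality \equ{eq:generaldelta} with the choice $\delta=\frac{n}{n+2}$, so that $(1+\delta^{-1}(q-1)/q)^{-n}\geq e^{-1}2^{-n}$, and the $\log(2en)$ arises entirely from the conversion factor
\[
\left\lceil\frac{\log(1-\delta)}{\log(1-\vre)}\right\rceil
=\left\lceil\frac{\log(1+n/2)}{-\log(1-\vre)}\right\rceil
<\frac{\log(2n)}{\vre}+1<\frac{\log(2en)}{\vre}.
\]
Your remark about choosing ``$\delta$ close to $1$ so that the base remains near $2^{-n}$'' is the correct instinct; it just needs to be paired with Lemma~\ref{lem:weakdeltabound} at that $\delta$, not with a hypothetical full-Kakeya bound.
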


\begin{proof}
%For $\vre\in[1/2,1)$, the claim follows immediately from Lemma~\ref{lem:weakdeltabound}. Thus, we may assume without loss of generality that $\vre\in(0,1/2)$. Let $\delta=1/2$ and note that
%$$\left\lceil\frac{\log(1-\delta)}{\log(1-\vre)} \right\rceil\leq \left\lceil\frac{\log{(2})}{-\log(1-\vre)} \right\rceil<\frac{1}{\vre}.$$

We first claim that if $K\subset\F_q^n$ is an $\vre$-Kakeya set of rank $r$, then for any $\vre<\delta<1$
\eq{eq:generaldelta}{|K|\geq \left(\left\lceil\frac{\log(1-\delta)}{\log(1-\vre)}\right\rceil\right)^{-1}\cdot\left(1 +\frac{1}{\delta}
(q-1)q^{-r}\right)^{-n} q^n.}
To see this, let $\vre<\delta<1$, and assume for contradiction that $K\subset \F_q^n$ is an $\vre$-Kakeya set of rank $r$ with cardinality smaller than the right-hand side of~\eqref{eq:generaldelta}. By Lemma~\ref{lem:epstodelta}, this implies that there must exist a $\delta$-Kakeya set $\AAA\in\F_q^n$ of rank $r$ with cardinality $|\AAA|\leq  \left\lceil\frac{\log(1-\delta)}{\log(1-\vre)}\right\rceil|K|< \left(1 +\frac{1}{\delta}
(q-1)q^{-r}\right)^{-n} q^n$, which contradicts Lemma~\ref{lem:weakdeltabound}.

Next, we use~\eqref{eq:generaldelta} to show that if $|K|$ is an $\vre$-Kakeya set of rank $r$, it must satisfy~\eqref{eq:generalr}.
%\eq{eq:specificvals}{|K|\geq \vre \left(1 +2
%	(q-1)q^{-r}\right)^{-n} q^n.}
For $\vre\in[1/2,1)$, this follows immediately from Lemma~\ref{lem:weakdeltabound}. We may therefore assume without loss of generality that $\vre\in(0,1/2)$. Let $\delta=1/2$ and note that for all $\vre$ in this range
$$\left\lceil\frac{\log(1-\delta)}{\log(1-\vre)} \right\rceil= \left\lceil\frac{\log{(2})}{-\log(1-\vre)} \right\rceil<\frac{\log{(2})}{-\log(1-\vre)}+1< \frac{1}{\vre}.$$
Thus, applying~\eqref{eq:generaldelta} with $0<\vre<\delta=1/2$ establishes~\eqref{eq:generalr}.

Finally, we assume $r=1$ and establish~\eqref{eq:r1}. 
Let $\delta=\frac{1}{1+2/n}$ and note that 
\[
\left(1 +\frac{1}{\delta} (q-1)q^{-r}\right)^{-n} q^n
\geq 
e^{-1}2^{-n}q^n \; .
\]
Hence, for $\vre\in\left[\delta,1\right]$, \eqref{eq:r1} follows immediately from Lemma~\ref{lem:weakdeltabound}.  
We may therefore assume without loss of generality that $\vre\in\left(0,\delta \right)$. 
For all $\vre$ in this range
$$\left\lceil\frac{\log(1-\delta)}{\log(1-\vre)} \right\rceil= \left\lceil\frac{\log{\left(1+\frac{n}{2}\right)}}{-\log(1-\vre)} \right\rceil<\frac{\log\left(2n\right)}{\vre}+1< \frac{\log(2en)}{\vre}.$$
Thus, applying~\eqref{eq:generaldelta} with $0<\vre<\delta=\frac{1}{1+2/n}$ establishes~\eqref{eq:r1}.
\end{proof}

%The following fundamental bounds on the minimal cardinality of  Kakeya
%sets were 
%established in \cite{Dvir, DKSS, KLSS}:
%
%\begin{thm}\name{thm: bound Kakeya}
%  If $K \subset \F_q^n$ is a Kakeya set of rank $r$ then
%   $$|K| \geq f(q,r,n) q^n, \ \ \text{ where } f(q,r,n) \df \left(1 +
%  (q-1)q^{-r} \right)^{-n}.$$ 
%  \end{thm}
%Building on these results we obtain the following bound for
%$\vre$-Kakeya sets:
%\begin{thm}\name{thm: bound eps Kakeya}
%Let $\vre \in (0,1)$ and let $K \subset \F_q^n$ be an
%$\vre$-Kakeya set of rank $r$. Then
%$$
%|K| \geq \frac{\vre}{\vre + \log |\Gr_{n,r}(\F_q)|} f(q,r,n) q^n.
%$$
%\end{thm}

We will need the following consequence:

\begin{cor}\name{cor: bound eps Kakeya}
  \begin{itemize}
    \item[(i)]
If $K \subset \F_q^n$ is an $\vre$-Kakeya set of rank 2 then $\frac{|K|}{q^n} >
\vre e^{-2n/q}.$
\item[(ii)]
If $K' \subset
\F_q^n$ satisfies $\frac{|K'|}{q^n} \ge 1 -\vre e^{-2n/q}$
then the set
$$
\SSS \df \{S \in \Gr_{n,2}(\F_q) : \forall x \in \F_q^n, \, (x+S) \cap K' \neq \varnothing\}
$$
satisfies
$$
|\SSS| > (1-\vre) |\Gr_{n,2}(\F_q)|. 
$$
%\item[(iii)]
  %If $K' \subset \F_q^n$ satisfies $\frac{|
%  K'|}{q^n} > 1- e^{-n/q}$ then there is $S \in \Gr_{n,2}(\F_q)$ such
  %that for all $x \in \F_q^n, \ (x+S) \cap K' \neq \varnothing.$ \comor{If we only analyze the fully-random version of our %construction (random $L$ and random $S$), we can remove this last item and its proof.}
\end{itemize}
  \end{cor}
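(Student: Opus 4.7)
The plan is to observe that both parts of the corollary are essentially elementary consequences of Theorem~\ref{thm: bound eps Kakeya}, with part (ii) obtained from part (i) by a clean contrapositive (complement) argument. The substantive content has already been placed into Theorem~\ref{thm: bound eps Kakeya}, so no further geometric or combinatorial ideas should be needed here.

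For part (i), I would specialize the bound \equ{eq:generalr} of Theorem~\ref{thm: bound eps Kakeya} to $r=2$, obtaining
\[
\frac{|K|}{q^n} > \vre \left(1 + \tfrac{2(q-1)}{q^2}\right)^{-n}.
\]
It then suffices to verify the elementary estimate $\left(1 + 2(q-1)/q^2\right)^{-n} \geq e^{-2n/q}$, which reduces via $\log(1+x) \leq x$ to the chain $\log\!\bigl(1+\tfrac{2(q-1)}{q^2}\bigr) \leq \tfrac{2(q-1)}{q^2} \leq \tfrac{2}{q}$. This yields the claimed bound $|K|/q^n > \vre e^{-2n/q}$.

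For part (ii), I would argue by contrapositive. Let $K \df \F_q^n \setminus K'$ and suppose that $|\SSS| \leq (1-\vre)|\Gr_{n,2}(\F_q)|$. Then for more than an $\vre$-fraction of $S \in \Gr_{n,2}(\F_q)$ there exists $x \in \F_q^n$ with $(x+S) \cap K' = \varnothing$, i.e., with $x + S \subset K$. In other words, $K$ is an $\vre$-Kakeya set of rank $2$. Applying part (i) to $K$ gives $|K|/q^n > \vre e^{-2n/q}$, which translates to $|K'|/q^n < 1 - \vre e^{-2n/q}$, contradicting the hypothesis on $|K'|$.

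There is no real obstacle: the only nontrivial check is the inequality $(1 + 2(q-1)/q^2)^{-n} \geq e^{-2n/q}$ in part (i), which is a one-line estimate. The cleanliness of the contrapositive in part (ii) hinges on the fact that "$S$ has no translate inside $K$" is exactly the negation of "every translate of $S$ meets $K'$", so the $\vre$-Kakeya threshold for $K$ and the $(1-\vre)$-threshold for $\SSS$ match up without loss.
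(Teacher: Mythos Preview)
Your proof is correct and essentially identical to the paper's: for (i) the paper also specializes \equ{eq:generalr} to $r=2$ and bounds $\left(1+\tfrac{2}{q}-\tfrac{2}{q^2}\right)^{n}\leq e^{2n/q}$ (via $(1+2/q)^{q/2}\leq e$, which is equivalent to your $\log(1+x)\leq x$ estimate), and for (ii) it likewise sets $K=\F_q^n\sm K'$ and applies (i). One tiny wording nit: in your contrapositive you should say ``at least an $\vre$-fraction'' rather than ``more than an $\vre$-fraction'', but since the definition of $\vre$-Kakeya only requires $\geq \vre$, this does not affect the argument.
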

\begin{proof}
 For $r=2$ we  have that
  $$
  %\begin{align}
\frac{1}{\left(1 +2
	(q-1)q^{-r}\right)^{-n}}=\left(1+\frac{2}{q}-\frac{2}{q^2}\right)^{n}\leq
    \left[\left(1+\frac{2}{q}\right)^{q/2}\right]^{2n/q}\leq e^{2n/q}.
    $$
     %    \end{align}
%    and
%    $$
%|\Gr_{n,2}(\F_q)| = \frac{(q^n-1)(q^n-q)}{(q^2-1)(q^2-q)} < q^{2n-2}.
%$$
%\combarak{please check that you agree and that this fixes the silly
%  issue with the $\vre$.}
Thus (i) is an immediate consequence of~\eqref{eq:generalr}. Assertion (ii) follows from (i) by setting
$K = \F_q^n \sm K'$. 
%and (iii) follows from Theorem
%\ref{thm: bound Kakeya}
%by a similar argument.  
\end{proof}

\section{Proof of Theorem~\ref{thm: main}}

%Assume without loss of generality that $n$ is large enough, and 
Let $p$ be a prime number satisfying 
\eq{eq: proved it again}{
  n \leq p \leq 2 n.
}
%(such a prime exists for sufficiently large $n$ by the prime number
%theorem).
We define a probability space as follows.
Let $L=g\Z^n$ be a random lattice chosen according to $\mu_n$, and $S$ be randomly chosen from the uniform distribution on $\Gr_{n,2}(\F_p)$, independently of $L$. Define the lattice $L'=p^{-1}g\pi_p^{-1}(S)$ and note that $L\subset L'\subset \frac{1}{p}L$. By Proposition~\ref{prop: still haar}, we have that $L'$ is distributed according to $\mu_{n,p^{-2}}$. 
Therefore, the left-hand side of~\eqref{eq: conclusion main}, which we are trying to bound from above, is equal to 
\[
\Pr\Big(\Theta_{\KK}(L)> M \Big) = 
\Pr\Big(\Theta_{\KK}(L')> \frac{M}{p^2} \Big) \; .
\]

%\eq{eq:probequiv}{
%\begin{split}
%\mu_n&\left( \left\{L \in \LL_n : \Theta_{\KK}(L)>M \right \} \right)=\mu_{n,p^{-2}}\left( \left\{L' \in %p^{-\frac{2}{n}}\LL_n : \Theta_{\KK}(L')> \frac{M}{p^2} \right \} \right)\\
%&=\Pr\left(\left\{L\in\LL_n,S\in\Gr_{n,2} : \Theta_{\KK}(L_1)> \frac{M}{p^2}  \right\}\right).
%\end{split}
%}
Let $J$ be the dilate of $\KK$ of volume 
\eq{eq: def V}{V \df p^{-2} \left(1+\frac{2}{p} \right)^{-n} M.}
Applying Corollary~\ref{cor: Rogers bound} with $\kappa = e^{-\frac{V}{2}}$ we have
\[
\Pr\Big( \vre(J,L)> e^{-\frac{V}{2}} \Big) <  c_0 \, e^{-\frac{V}{2}},
\]
where
$c_0 \df 1 + \crog$. 
Here we used that $V \le \eta$ (where $\eta$ is as defined in~\eqref{eq: def eta}) which holds assuming the constant $c_4$ is chosen small enough. 
From now on, we fix an $L$ for which 
\begin{align}\label{eq:bound on hole}
   \vre(J,L) \le e^{-\frac{V}{2}},    
\end{align}
and we show that when choosing $S $, with probability at least $1- \vre$, for $\vre$ to be chosen below, we have $\Theta_{\KK}(L') \le M/p^2$. 
\combarak{tweaked a bit in the preceding paragraph, so that the numerology of constants gets explained at the end. Right now one has to look at two places, here and at the end of the proof. Feel free to undo if you think it makes things worse.}

Define
\eq{eq: def BL}{
  B_L \df \TT_L \sm \pi_L(J),
}
so that $m_L (B_L) \leq  e^{-V/2}$. 
\comoded{do we really need both notations $\PPdisc$ and $\overline{\mathcal{P}}_L^{\mathrm{(disc)}}$? The notation feels a bit heavy, especially given the short proof}
Let $\PPdisc$ be as in \equ{eq: discrete par}, and let 
\eq{eq: ppdiscbar}{
  \overline{\mathcal{P}}_L^{\mathrm{(disc)}}  \df \pi_L\left(\PPdisc \right)  =
\pi_L \left(\frac{1}{p} \cdot L  \right) \subset
\TT_L.
}
The Haar measure $m_L$ on the torus $\TT_L$
satisfies 
that
%\eq{eq: for averaging}{
$$
m_L(A) = \frac{1}{p^n}\sum_{x \in \PPdiscbar} m_L(A-x),
$$
% }
and applying this with $A$ taken to be $B_L$ we find that 
there is $u \in \TT_L$ such that
$$
\frac{1}{p^n} \left|\left(u + \PPdiscbar \right) \cap B_L \right|
\leq m_L(B_L) \leq  e^{-\frac{V}{2}} \;.
$$
Recall that we have an identification of $\F_p^n$
with $\big( 0, \frac{1}{p},
    \ldots, 1-\frac{1}{p} \big)^n$ by reducing mod $p$ and then
  dividing by $p$, and a further identification of $\big( 0, \frac{1}{p},
    \ldots, 1-\frac{1}{p} \big)^n$ with $\PPdiscbar$. With these
  identifications in mind we view $\F_p^n$ as a subset of $\TT_L$, and
  define 
$$
K' \df \left\{x \in \F_p^n : u + x \in \pi_L(J) \right\},
$$
so that 
$$\frac{|K'|}{p^n} \ge 1-e^{-\frac{V}{2}}.$$  
This implies via Corollary \ref{cor: bound eps Kakeya}(ii), applied with \eq{eq: defn epsilon1}{
\vre\df e^{-\frac{V}{2}}e^{2n/p} ,
}that 
with probability at least $1-\vre$ over the choice of $S$, it holds that for all $x \in \F_p^n$, 
$u+ x +S$ intersects $\pi_L(J)$. 
Recalling that $L'=\frac{1}{p}\cdot g\pi_p^{-1}(S)$, this equivalently says that
\begin{align}\label{eq: cover the grid}
 u + \frac{1}{p} \cdot L \subset L' + J \; .
\end{align}
But by Lemma~\ref{lem: half to full} and \eqref{eq:bound on hole}, and using that $V > 2 \log 2$ (which we can assume by taking $c_3$ large enough), we have
$$
 L+2J = \R^n . 
$$
Together with~\eqref{eq: cover the grid}, this implies that
\begin{align*}%\label{eq: cover all of space}
 L' + \Big(1+\frac{2}{p}\Big) J  
 \supset 
 u + \frac{1}{p} \cdot L + \frac{2}{p} J 
 =
 \R^n
 \; .
\end{align*}%
To summarize, Proposition \ref{prop: still haar} shows that with all but probability $c_0 \, e^{-V/2} + \vre$ (due to the choice of $L$ and $S$), we have 
$L' + \Big(1+\frac{2}{p}\Big) J  = \R^n$ and hence
$\Theta_{\KK}(L') \le \frac{M}{p^2}$. Using our choices \equ{eq: proved it again}, \equ{eq: def V} and \equ{eq: defn epsilon1} we see that for appropriate choices of constants $c_1, c_2,$
we have \equ{eq: conclusion main}. 

\appendix
\section{Proof of Lemma~\ref{lem:weakdeltabound}}\name{appendix A}
The case $\delta=1$ is precisely~\cite[Theorem 1]{KLSS}. The general
case $\delta \in (0,1]$ (as in Lemma~\ref{lem:weakdeltabound}) follows 
from minor modifications to their proof. For the reader's convenience, we include
the full proof here, much of it taken verbatim from~\cite{KLSS}. 

We start with some necessary background. Let $\N_0$ denote the set of non-negative integers. For an $n$-tuple $i=(i_1,\ldots,i_n) \in \N_0^n$, we define $\|i\| \df i_1+\cdots +i_n$ and if $X=(X_1,\ldots,X_n)$ then $X^i \df X_1^{i_1} \cdots X_n^{i_n}$. Any polynomial $P$ in $n$ variables over some field $\F$ can be expanded in the form
\[
P(X+Y) = \sum_{i \in \N_0^n} P^{(i)} (Y) X^i \; ,
\]
for some polynomials $P^{(i)}$ over $\F$ in $n$ variables. We refer to $P^{(i)}$ as the \emph{Hasse derivative of $P$ of order $i$}. It is easy to see that $P^{(0)} = P$ and that for $\|i\| > \deg P$, $P^{(i)} = 0$. Moreover, if $P_H$ denotes the homogeneous part of $P$, then
  \[
    (P_H)^{(i)} = \left\{\begin{array}{lr}
        (P^{(i)})_H & \text{if } \deg P^{(i)} = \deg P - \|i\| ,\\
        0 & \text{if } \deg P^{(i)} < \deg P - \|i\| .
        \end{array}
        \right.
  \]

For a nonzero polynomial $P$ in $n$ variables over a field $\F$, 
we define its \emph{multiplicity of zero} at some point $a \in \F^n$, 
denoted $\mu(P,a)$, as the largest $m \ge 0$ such that $P^{(i)}(a) = 0$ for 
all $i \in \N_0^n$ with $\|i\| < m$. Alternatively, it is the largest $m$ for which we can write
\[
P(X+a) = \sum_{i \in \N_0^n ~:~ \|i\| \ge m} c(i,a) X^i
\]
for some $c(i,a) \in \F$. We sometimes also say that $P$ \emph{vanishes} at $a$ with multiplicity $m$. 

We will use the following relatively straightforward lemmas.

\begin{lem}[{{\cite[Lemma 5]{DKSS}}}]\label{lem:mudecrease}
Let $n \ge 1$ be an integer. 
For any nonzero polynomial $P$ in $n$ variables over a field $\F$, $a \in \F^n$, and $i \in \N_0^n$, it holds that
\[
\mu(P^{(i)},a) \ge \mu(P,a) - \|i\| \; .
\]
\end{lem}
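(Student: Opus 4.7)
The plan is to reduce the lemma to a single composition identity for Hasse derivatives, namely
\[
(P^{(i)})^{(k)} = \binom{i+k}{i}\, P^{(i+k)}, \qquad \text{where } \binom{i+k}{i} \df \prod_{s=1}^n \binom{i_s+k_s}{i_s}.
\]
Once this identity is in hand, the lemma drops out. If $\|i\| \ge \mu(P,a)$ the claimed bound is non-positive and there is nothing to prove, since multiplicities are non-negative. Otherwise, set $m \df \mu(P,a)$ and let $k \in \N_0^n$ satisfy $\|k\| < m - \|i\|$; then $\|i+k\| = \|i\| + \|k\| < m$, so by definition of $\mu$ we have $P^{(i+k)}(a) = 0$, hence $(P^{(i)})^{(k)}(a) = 0$. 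Since this holds for every such $k$, the definition of multiplicity gives $\mu(P^{(i)},a) \ge m - \|i\|$.

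To establish the composition rule, I would expand the polynomial $P(X+Z+a)$ in the $2n$ fresh indeterminates $X=(X_1,\ldots,X_n)$, $Z=(Z_1,\ldots,Z_n)$ in two different ways and compare coefficients. First, grouping $X+Z$ as the shift and applying the multivariate binomial theorem,
\[
P((X+Z)+a) = \sum_{j \in \N_0^n} P^{(j)}(a)\,(X+Z)^j = \sum_{i,k \in \N_0^n} \binom{i+k}{i}\, P^{(i+k)}(a)\, X^i Z^k.
\]
Second, grouping $Z+a$ as the shift and then expanding each Hasse derivative $P^{(i)}$ at $a$ with shift $Z$,
\[
P(X+(Z+a)) = \sum_{i \in \N_0^n} P^{(i)}(Z+a)\, X^i = \sum_{i,k \in \N_0^n} (P^{(i)})^{(k)}(a)\, X^i Z^k.
\]
Matching coefficients of $X^i Z^k$ yields the identity at the point $a$; since $a \in \F^n$ is arbitrary, this promotes to the polynomial identity stated above.

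I do not anticipate any real obstacle. The only mild subtlety is characteristic: in positive characteristic the coefficient $\binom{i+k}{i}$ may vanish in $\F$. This is harmless for the argument, since I only need the product $\binom{i+k}{i}\, P^{(i+k)}(a)$ to vanish, which is immediate from $P^{(i+k)}(a)=0$ regardless of whether the binomial coefficient is zero in $\F$. (In case $P^{(i)}$ is itself the zero polynomial, one adopts the convention $\mu(0,a)=\infty$ and the inequality is trivial.)
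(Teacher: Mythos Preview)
The paper does not give its own proof of this lemma; it is quoted verbatim from \cite[Lemma~5]{DKSS}. Your argument is correct and is essentially the standard one appearing in \cite{DKSS}: establish the composition rule $(P^{(i)})^{(k)} = \binom{i+k}{i} P^{(i+k)}$ by double-expanding $P(X+Z+a)$, and then read off the multiplicity bound.
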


\begin{lem}[{\cite[Proposition 10]{DKSS}}]\label{lem:existenceofpoly}
Let $n,m \ge 1$ and $k \ge 0$ be integers, and $\F$ a field.
If a finite set $S \subset \F^n$ satisfies $\binom{m+n-1}{n} |S| < \binom{n+k}{n}$,
then there exists a nonzero polynomial over $\F$ in $n$ variables of degree at most $k$,
vanishing at every point of $S$ with multiplicity at least $m$.
\end{lem}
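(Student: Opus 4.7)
The plan is a standard dimension count in the space of polynomials combined with the characterization of vanishing with multiplicity via Hasse derivatives, which was recalled just before the lemma.

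First, I would regard the set $V_k$ of polynomials in $n$ variables over $\F$ of degree at most $k$ as an $\F$-vector space. A basis is given by the monomials $X^i$ with $i \in \N_0^n$ and $\|i\| \le k$, so $\dim_\F V_k = \binom{n+k}{n}$. The strategy is to express the conditions ``$P$ vanishes at every $a \in S$ with multiplicity at least $m$'' as a system of homogeneous linear equations on the coefficients of $P$, and then show that the number of equations is strictly smaller than $\dim_\F V_k$, which forces the existence of a nonzero solution.

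Next, I would unpack the multiplicity condition. By the definition recalled above, $\mu(P,a) \ge m$ is equivalent to $P^{(i)}(a) = 0$ for every multi-index $i \in \N_0^n$ with $\|i\| < m$. Since each Hasse derivative $P^{(i)}$ is an $\F$-linear function of $P$ (the coefficients of $P^{(i)}$ are integer linear combinations of those of $P$), and evaluation at a fixed point $a \in \F^n$ is also $\F$-linear, the condition $P^{(i)}(a) = 0$ cuts out a single hyperplane in $V_k$. The number of multi-indices $i \in \N_0^n$ with $\|i\| < m$, i.e.\ with $i_1 + \cdots + i_n \le m-1$, equals $\binom{(m-1)+n}{n} = \binom{m+n-1}{n}$, by a stars-and-bars count.

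Finally, I would conclude by linear algebra. Ranging over all $a \in S$ and all $i$ with $\|i\| < m$ produces a system of at most $\binom{m+n-1}{n} \cdot |S|$ homogeneous linear constraints on elements of $V_k$. The hypothesis states exactly that this bound is strictly smaller than $\dim_\F V_k = \binom{n+k}{n}$. Hence the intersection of all these hyperplanes is a subspace of positive dimension, and any nonzero element of it yields the required polynomial. There is no real obstacle here; the only thing to check carefully is that Hasse derivatives at a point are $\F$-linear functionals on $V_k$, which is immediate from their definition, and that the count of multi-indices with $\|i\| < m$ is correct.
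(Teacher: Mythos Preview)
Your proof is correct. Note, however, that the paper does not actually supply its own proof of this lemma: it is stated with a citation to \cite[Proposition~10]{DKSS} and used as a black box. The argument you give---counting linear constraints from Hasse derivatives against the dimension $\binom{n+k}{n}$ of the space of polynomials of degree at most $k$---is exactly the standard proof, and is the one given in \cite{DKSS}.
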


\begin{lem}[{\cite[Lemma 14]{KLSS}}]\label{lem:polynomialrestriction}
Let $n,r \ge 1$ be integers, and $P$ a nonzero polynomial in $n$ variables over a field $\F$. 
Suppose that $b,d_1,\ldots,d_r \in \F^n$. Then for any $t_1,\ldots,t_r \in \F$, 
\[
\mu(P(b+T_1 d_1 + \cdots + T_r d_r), (t_1,\ldots,t_r)) \ge \mu(P,b+t_1 d_1 + \cdots + t_r d_r) \; ,
\]
where we view $P(b+T_1 d_1 + \cdots + T_r d_r)$ as a polynomial in the formal variables $T_1,\ldots, T_r$. 
\end{lem}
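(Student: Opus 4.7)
The plan is to prove the statement directly from the alternative characterization of multiplicity given in the excerpt, namely that $\mu(P,a) \ge m$ is equivalent to the existence of an expansion
\[
P(X+a) = \sum_{i \in \N_0^n \,:\, \|i\| \ge m} c(i,a)\, X^i.
\]
Set $a \df b + t_1 d_1 + \cdots + t_r d_r$ and $m \df \mu(P,a)$, and let $Q(T_1,\ldots,T_r) \df P(b + T_1 d_1 + \cdots + T_r d_r)$. The goal is to show that $Q$, when expanded around $(t_1,\ldots,t_r)$, has every nonzero monomial of total degree at least $m$ in the shifted variables $U_j \df T_j - t_j$.

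First I would rewrite $Q$ by the change of variables $U_j = T_j - t_j$, giving
\[
Q(T_1,\ldots,T_r) = P\bigl(a + U_1 d_1 + \cdots + U_r d_r\bigr),
\]
and then substitute $X = U_1 d_1 + \cdots + U_r d_r$ into the multiplicity-$m$ expansion of $P$ at $a$. Each coordinate $X_k$ becomes a linear form $\sum_j U_j (d_j)_k$ in the formal variables $U_1,\ldots,U_r$; consequently every monomial $X^i = X_1^{i_1}\cdots X_n^{i_n}$ becomes, after substitution, a polynomial in $U_1,\ldots,U_r$ in which each nonzero term has total degree exactly $\|i\|$.

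Summing the substituted expansion over $\|i\| \ge m$, we see that $Q$ is a polynomial in $U_1,\ldots,U_r$ in which every nonzero monomial has total $U$-degree at least $m$. Equivalently, $Q$ written as a polynomial in $T_1,\ldots,T_r$ has an expansion around $(t_1,\ldots,t_r)$ consisting only of terms $(T_1 - t_1)^{j_1}\cdots (T_r - t_r)^{j_r}$ with $j_1 + \cdots + j_r \ge m$. By the characterization of multiplicity recalled above (in $r$ variables rather than $n$), this is precisely the statement $\mu(Q, (t_1,\ldots,t_r)) \ge m$, which is the desired inequality.

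There is no real obstacle: the argument is essentially a single substitution combined with the observation that linear substitutions preserve the grading by total degree. The only bookkeeping to be careful about is that the change of variables $U_j = T_j - t_j$ is a polynomial-algebra isomorphism, so a monomial of $U$-degree $\ge m$ is exactly a monomial of degree $\ge m$ in $(T_j - t_j)$.
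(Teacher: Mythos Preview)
Your argument is correct: the substitution $X = \sum_j U_j d_j$ into the expansion $P(X+a) = \sum_{\|i\|\ge m} c(i,a) X^i$ sends each monomial $X^i$ to a homogeneous polynomial of degree exactly $\|i\|$ in the $U_j$, so $Q(T) = P(a + \sum_j U_j d_j)$ has only terms of $U$-degree at least $m$, which is precisely $\mu(Q,(t_1,\ldots,t_r)) \ge m$. The one edge case you do not mention is that $Q$ may be the zero polynomial (even though $P$ is nonzero), in which case $\mu(Q,\cdot)$ is not defined by the paper's convention; but then the inequality is understood to hold trivially, so this is not a genuine gap.

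As for comparison: the paper does not actually prove this lemma. It is quoted as \cite[Lemma 14]{KLSS} among ``relatively straightforward lemmas'' used as black boxes in the proof of Lemma~\ref{lem:weakdeltabound}. Your direct argument via the alternative characterization of multiplicity is exactly the standard one and matches what one finds in the cited source.
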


\comoded{5/28: gave more credit to DKSS as suggested by Barak}
Finally, we will need a multiplicity version of the standard Schwartz-Zippel lemma~\cite{DKSS}. 

\begin{lem}[{\cite[Lemma 2.7]{DKSS}}]\label{lem:schwartzzippel}
Let $n \ge 1$ be an integer, $P$ a nonzero polynomial in $n$ variables over a field $\F$, and $S \subset \F$ a finite set. Then
\[
|S|^{-(n-1)} \sum_{z \in S^n} \mu(P,z) \le \deg P \; .
\]
\end{lem}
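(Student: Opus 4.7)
My plan is to proceed by induction on the number of variables $n$. For the base case $n=1$, the Hasse multiplicity $\mu(P,z)$ coincides with the ordinary root multiplicity of the univariate polynomial $P$ at $z$ (since $\mu(P,z)$ equals the order of vanishing of $P(X+z)$ at $X=0$, which is the exponent of $(X-z)$ in $P$), so $\sum_{z\in S}\mu(P,z)\le \deg P$ is classical.

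For the inductive step $n\ge 2$, let $d=\deg P$ and view $P$ as a polynomial in $X_n$ with coefficients in $\F[X_1,\ldots,X_{n-1}]$, writing $P = \sum_{j=0}^{e} Q_j(X_1,\ldots,X_{n-1})\, X_n^j$ with $Q_e\ne 0$, $e=\deg_{X_n}P$, and $\deg Q_e\le d-e$. The naive strategy---restrict $P$ to the line $\{(z',X_n):X_n\in\F\}$ and invoke the base case---fails precisely when $P(z',X_n)\equiv 0$, which is the main obstacle. To sidestep it, for each $z'\in\F^{n-1}$ I would set $k(z')\df\min\{\mu(Q_j,z'):Q_j\ne 0\}$, choose a multi-index $i'\in\N_0^{n-1}$ with $\|i'\|=k(z')$ such that $Q_{j^\star}^{(i')}(z')\ne 0$ for some $j^\star$, and define the univariate polynomial
\[
\tilde R_{z'}(X_n)\df P^{(i',0)}(z',X_n)=\sum_j Q_j^{(i')}(z')\, X_n^j,
\]
which is nonzero by the choice of $i'$ and has degree at most $e$.

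The key pointwise inequality I then need is
\[
\mu(P,(z',z_n))\le k(z') + \mu(\tilde R_{z'},z_n) \qquad \text{for every } z_n\in \F.
\]
This should follow by combining Lemma~\ref{lem:mudecrease} (which yields $\mu(P,(z',z_n))\le \mu(P^{(i',0)},(z',z_n))+\|i'\|$) with Lemma~\ref{lem:polynomialrestriction} applied to the polynomial $P^{(i',0)}$ in the $X_n$-direction (which yields $\mu(P^{(i',0)},(z',z_n))\le \mu(\tilde R_{z'},z_n)$). Summing over $z_n\in S$ and using the $n=1$ case on the nonzero univariate polynomial $\tilde R_{z'}$ gives $\sum_{z_n\in S}\mu(P,(z',z_n))\le k(z')|S|+e$.

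To conclude, since $Q_e\ne 0$ is one of the summands defining $k(z')$, we have $k(z')\le \mu(Q_e,z')$ for every $z'$. Summing the previous bound over $z'\in S^{n-1}$ and applying the inductive hypothesis to $Q_e$ (in $n-1$ variables and of degree at most $d-e$) gives
\[
\sum_{z\in S^n}\mu(P,z)\le |S|\sum_{z'\in S^{n-1}}\mu(Q_e,z')+e|S|^{n-1}\le |S|\cdot(d-e)|S|^{n-2}+e|S|^{n-1}=d|S|^{n-1},
\]
as required. The hard part is recognizing that the obvious restriction $P(z',X_n)$ can be identically zero and replacing it by $\tilde R_{z'}$---paying $k(z')$ in the multiplicity accounting to force the restriction to be nonzero---so that the base case is applicable at every $z'$. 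Once this device is in place, all remaining steps (the derivative identity, the inequality $k(z')\le \mu(Q_e,z')$, and the final summation) are straightforward bookkeeping.
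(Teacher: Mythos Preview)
The paper does not supply its own proof of this lemma; it is simply quoted from \cite{DKSS}. Your argument is correct and is essentially the standard induction-on-$n$ proof given there: write $P=\sum_j Q_j X_n^j$, at each $z'\in S^{n-1}$ pass to a Hasse derivative $P^{(i',0)}$ of order $k(z')=\min_j \mu(Q_j,z')$ so that the restriction to the $X_n$-line is nonzero, bound $\mu(P,(z',z_n))\le k(z')+\mu(\tilde R_{z'},z_n)$ via Lemmas~\ref{lem:mudecrease} and~\ref{lem:polynomialrestriction}, apply the univariate case, and then use $k(z')\le \mu(Q_e,z')$ together with the inductive hypothesis on $Q_e$. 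The directions of the two cited inequalities are used correctly (restriction can only raise multiplicity; Hasse differentiation drops it by at most $\|i\|$), and the final bookkeeping $|S|(d-e)|S|^{n-2}+e|S|^{n-1}=d|S|^{n-1}$ is right.
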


\begin{proof}[Proof of Lemma~\ref{lem:weakdeltabound}]
Let $m,k$ be positive integers satisfying 
\begin{align}\label{eq:choice of m and k}
k < \delta q^r \left\lceil \frac{qm-k}{q-1} \right\rceil \; .
\end{align}
Our goal for the rest of the proof is to show that under the condition~\eqref{eq:choice of m and k},
\begin{align}\label{eq:lower bound on K}
|K| \ge \binom{n+k}{n} \Big/ \binom{m+n-1}{n} \; .
\end{align}
The lemma then follows by taking $k=N q^{r+1} - 1$ and $m=\lceil (q^r + \frac{q - 1}{\delta})N \rceil$
where $N$ is a positive integer. With this choice,~\eqref{eq:choice of m and k} holds, and
the lemma follows by noting that the right-hand side of~\ref{eq:lower bound on K} converges
to $\left(1 + (q-1)q^{-r}/\delta \right)^{-n}q^n$ as $N$ goes to infinity. 

Assume towards contradiction that~\eqref{eq:lower bound on K} does not hold. Thus, by Lemma~\ref{lem:existenceofpoly}, there exists a nonzero polynomial $P$ in $n$ variables over $\F_q$ of degree at most $k$ that vanishes at every point of $K$ with multiplicity at least $m$. 
Let $\ell \df \lceil \frac{qm-k}{q-1} \rceil$ and fix $i=(i_1,\ldots,i_n) \in \N_0^n$ satisfying $w \df \|i\| < \ell$. Let $Q \df P^{(i)}$ be the $i$th Hasse derivative of $P$.

Let $D \subset (\F_q^n)^r$ be the set of all $r$-tuples of vectors $(d_1,\ldots,d_r)$
with the property that there exists $b \in \F_q^n$ such that
$b + t_1 d_1 + \cdots + t_r d_r \in K$ for all $t_1,\ldots, t_r \in \F_q$.
Since $K$ is a $\delta$-Kakeya set of rank $r$, we have
$|D| \ge \delta \cdot q^{nr}$.
(Notice that the span of $d_1,\ldots,d_r$ might be of rank less than $r$;
the statement is true because a $\delta$-Kakeya set of rank $r$ is also a
$\delta$-Kakeya set of rank $r'$ for all $r' \le r$.)\comoded{5/21: is that sufficiently clear? it's an annoying issue, and I was trying not to waste too much time on it} \combarak{I agree it's true and I agree it's a little annoying.}
Therefore, by our choice of $P$, for any $(d_1,\ldots,d_r) \in D$, there exists a $b \in \F_q^n$
such that for all $t_1,\ldots, t_k \in \F_q$,
\[
\mu(P, b+t_1 d_1 + \cdots + t_r d_r) \ge m \; ,
\]
and so by Lemma~\ref{lem:polynomialrestriction} and Lemma~\ref{lem:mudecrease},
\begin{align*}
\mu(Q(b+T_1 d_1+\cdots+T_r d_r), (t_1,\ldots,t_r)) &\ge
\mu(Q, b+t_1 d_1 + \cdots + t_r d_r) \\
& \ge m - w\; ,
\end{align*}
where in the left-hand side we consider 
$Q(b+T_1 d_1+\cdots+T_r d_r)$ as a polynomial in the variables
$T_1,\ldots,T_r$. 
But since
\[
\deg Q(b+T_1 d_1+\cdots+T_r d_r) \le \deg Q \le k-w < q(m-w) \; 
\]
(which follows from $w < \ell$), 
Lemma~\ref{lem:schwartzzippel} with $S=\F_q$ implies that $Q(b+T_1 d_1+\cdots+T_r d_r)$ 
is in fact the zero polynomial. 
 
Let $P_H$ and $Q_H$ denote the homogeneous parts of $P$ and $Q$, respectively (i.e., $P_H$ is the unique homogeneous polynomial for which $\deg (P-P_H) < \deg P$). 
It is easy to see that $Q(b+T_1 d_1+\cdots+T_r d_r)=0$ implies 
$Q_H(T_1 d_1+\cdots+T_r d_r)=0$ (note that there is no $b$ in the latter). 
It follows that $(P_H)^{(i)}(T_1 d_1+\cdots+T_r d_r)=0$
for all $(d_1,\ldots,d_r) \in D$.
Equivalently, $(P_H)^{(i)}$, considered as a polynomial in $n$ variables 
over the field of rational functions $\F_q(T_1,\ldots,T_r)$, vanishes at every point 
of the set
\[
D' \df \big\{ T_1 d_1 + \cdots + T_r d_r ~:~ (d_1,\ldots,d_r) \in D \big\} \subset S^n \; ,
\]
where
\[
S \df \big\{ \alpha_1 T_1 + \cdots + \alpha_r T_r ~:~ \alpha_1,\ldots,\alpha_r \in \F_q \big\} \; .
\]
Since $i$ is an arbitrary tuple satisfying $\|i\| < \ell$, 
this shows that $P_H$ vanishes with multiplicity at least $\ell$ at every point of $D'$.
On the other hand, by~\eqref{eq:choice of m and k},
\[
\deg P_H = \deg P \le k < \delta q^r \ell = \delta |S| \ell \; ,
\]
which implies by Lemma~\ref{lem:schwartzzippel} that $P_H$ is the zero polynomial. 
This is a contradiction since the homogeneous part of a nonzero polynomial is nonzero. 
\end{proof}

% Very briefly, the way to derive the improved bound from KLSS is to:

%(1)  replace the second application of Corollary 16 (top of page 9 in the arXiv version) by Lemma 15; the right hand side of the first displayed equation on page 9 by .. \eps q^r l = \eps |S| l.

%(2) the inequality relating k and m in (5) to k < \eps q^r ... (i.e., multiply rhs by eps)

%(3) At the end of the proof, keep the choice of k as is, but choose m to be (q^r + (q-1)/eps) N. 

%%%%%%%%%%%%%%%%%%%%%%%%%%%%%%%%%%%%%%%%%%%%%%%%%%%%%%  
\section{The expectation of the covering density is infinite}\name{appendix B}
%%%%%%%%%%%%%%%%%%%%%%%%%%%%%%%%%%%%%%%%%%%%%%%%%%%%%%

\begin{prop}
There is $c>0$ such that for all $n$ large enough and all $M \geq 1$ we have \eq{eq: conclusion main2}{
\mu_n \left( \left\{ L \in \LL_n: \Theta(L) > M \right \}\right) \geq \frac{c \, V_n^2}{2^n} \, \frac{1}{M} \; ,
}
where $V_n$ denotes the volume of the Euclidean ball of radius one in $\R^n$.
In particular, for any $n$ and any $\KK \in \Convn$ there is $C>0$ such that 
$$
\mu_n\left( \left\{L \in \LL_n : \Theta_{\KK}(L)>M \right \} \right) > \frac{C}{M}.
$$
\end{prop}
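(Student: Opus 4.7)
The plan is to use duality and transference to convert the event $\{\Theta(L)>M\}$ into the event that $L$ has an unusually short vector, and then apply the Siegel/Rogers mean value machinery introduced in Section~\ref{subsec: Haar Siegel}.

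First, I would exploit the invariance of $\mu_n$ under the duality involution $L \mapsto L^*$ (cf.~\equ{eq: dual lattice}), which gives
\[
\mu_n\bigl(\{L:\Theta(L)>M\}\bigr) \;=\; \mu_n\bigl(\{L:\Theta(L^*)>M\}\bigr).
\]
Combining the elementary bound $\rho(L^*)\geq \lambda_n(L^*)/2$ (the Voronoi-relevant vectors of $L^*$ span $\R^n$ and each has norm at most $2\rho(L^*)$) with the elementary transference inequality $\lambda_1(L)\,\lambda_n(L^*)\geq 1$, we obtain
\[
\Theta(L^*) \;=\; V_n\rho(L^*)^n \;\geq\; V_n\bigl(2\lambda_1(L)\bigr)^{-n}.
\]
Setting $r \df \tfrac12 (V_n/M)^{1/n}$, this yields $\{\lambda_1(L)\leq r\} \subset \{\Theta(L^*)>M\}$, so it suffices to prove
\[
\mu_n\bigl(\{L: \lambda_1(L)\leq r\}\bigr) \;\gtrsim\; V_nr^n \qquad \text{for small } V_nr^n.
\]

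I would prove this short-vector bound via the Paley--Zygmund inequality applied to $X(L)\df |L \cap B_r\setminus\{0\}|$. Siegel's mean value formula gives $\EE X = V_nr^n$. For the second moment, the Rogers formula for pairs of lattice points yields, for small $V_nr^n$,
\[
\EE[X^2] \;=\; 2V_nr^n + O\bigl((V_nr^n)^2\bigr),
\]
where the leading $2V_nr^n$ comes from the diagonal pairs $(v,\pm v)$ and the error term from rationally related off-diagonal pairs. Paley--Zygmund then yields $\mu_n(X \geq 1) \gtrsim V_nr^n$, and substituting $r=\tfrac12(V_n/M)^{1/n}$ (so $V_nr^n = V_n^2/(2^nM)$) produces \equ{eq: conclusion main2}. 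For the general $\KK$ statement, any $\KK \in \Convn$ satisfies, after translation, $B_{r_1}\subset\KK\subset B_{r_2}$ for some $0<r_1\leq r_2$; a direct comparison of dilates gives $\Theta_\KK(L) \geq (\Vol(\KK)/(V_nr_2^n))\,\Theta(L)$, and applying the Euclidean case with $M$ replaced by a constant multiple delivers the $C/M$ bound with $C$ depending on $n$ and $\KK$.

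The main technical step I anticipate is the second-moment estimate: Siegel's formula gives the first moment immediately, but the Rogers pair formula produces off-diagonal contributions indexed by rational linear relations between lattice points, and one must verify these are $O((V_nr^n)^2)$---negligible compared to the diagonal $2V_nr^n$---before Paley--Zygmund can be invoked. An alternative route that bypasses Rogers' pair formula is a direct analysis of the cusp structure of $\LL_n$: the set $\{L:\lambda_1(L)\leq r\}$ is essentially a neighborhood of the cusp corresponding to the stabilizer of a line, whose $\mu_n$-measure can be computed explicitly, via the Iwasawa decomposition on $\SL_n(\R)/\SL_n(\Z)$, to be of the same order $V_nr^n$.
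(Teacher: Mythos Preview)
Your proposal is correct and follows essentially the same route as the paper: reduce $\{\Theta(L)>M\}$ to a short-vector event via duality, then lower-bound $\mu_n(\lambda_1(L)\le r)$ by a Siegel-type moment computation. The only cosmetic differences are that the paper uses the direct hyperplane argument (short $v\in L^*$ forces large gaps between the hyperplanes $\{x:\langle x,v\rangle\in\Z\}\supset L$) in place of your $\lambda_n$-transference, and it quotes the Kleinbock--Margulis inequality $\mu_n(\lambda_1(L)<t)\ge \tfrac{1}{2\zeta(n)}V_nt^n-\tfrac{1}{4\zeta(n-1)\zeta(n)}V_n^2t^{2n}$ as a black box rather than rederiving it via Paley--Zygmund.
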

\begin{proof}
Let $\lambda_1(L)$ denote the length of the shortest nonzero vector of $L$.
%, and let $\reff$ denote the radius of a Euclidean ball of unit %volume. 
Given $M$, let $r \df (M/V_n)^{1/n}$ be the radius of a Euclidean ball of volume $M$.
%$\rho \df \frac{\reff}{r}$ satisfies $M =\rho^{-n}.$
Let $L^*$ denote the dual lattice of $L$, that is 
\eq{eq: dual lattice}{L^* \df \{u \in \R^n: \forall v \in L, \, u \cdot v \in \Z\}.
}
By considering the distance between affine hyperplanes perpendicular to the shortest nonzero vector of $L^*$, we see that $\lambda_1(L^*) < \frac{1}{2r} %= \frac{\rho}{2 \reff}
$ implies that $\Theta(L)>M$. In particular, taking into account that the measure $\mu_n$ is invariant under the mapping $\LL_n \to \LL_n, \ L \mapsto L^*$, we see that the left-hand side of \equ{eq: conclusion main2} is bounded below by 
\eq{eq: bound from below}{
  \mu_n \left( \left\{ L \in \LL_n: \lambda_1(L)< \frac{1}{2r} \right\} \right).}
  Using Siegel's summation formula, Kleinbock and Margulis \cite[\S 7]{Kleinbock_Margulis} obtained the estimate
  $$
  \mu_n \left( \left\{ L \in \LL_n : \lambda_1(L) < t \right\} \right) \geq \frac{1}{2 \zeta(n)} V_n t^n - \frac{1}{4 \zeta(n-1)  \zeta(n)} V_n^2 t^{2n},
  $$
  where $\zeta(n) = \sum_{m\in \N} m^{-n}$ is the Riemann zeta function. 
  Applying this estimate with $t = \frac{1}{2r}$ and using $M = V_n r^n$ and the fact that $\zeta(n) \to_{n\to \infty} 1$, we get that the left-hand side of \equ{eq: conclusion main2} is bounded from below by
  $$\frac{V_n^2}{2^{n+2}M} - \frac{V_n^4}{2^{2n+2}M^2}.
  $$
  By standard estimates for $V_n$, the first summand in this expression is the dominant one for $M \geq 1$. 
  
 This proves \equ{eq: conclusion main2}. Since any $\KK \in \Convn$ is contained in a dilate of a Euclidean ball, the second assertion of the proposition follows. 
\end{proof}

\bibliographystyle{alpha}
\bibliography{elonbib}

\def\cprime{$'$} \def\cprime{$'$} \def\cprime{$'$}
\begin{thebibliography}{DKSS13}

\bibitem[But72]{Butler}
G.~J. Butler.
\newblock Simultaneous packing and covering in {E}uclidean space.
\newblock {\em Proc. London Math. Soc. (3)}, 25:721--735, 1972.

\bibitem[CFR59]{CoxeterFewRogers}
H.~S.~M. Coxeter, L.~Few, and C.~A. Rogers.
\newblock Covering space with equal spheres.
\newblock {\em Mathematika}, 6:147--157, 1959.

\bibitem[COU01]{COU}
Laurent Clozel, Hee Oh, and Emmanuel Ullmo.
\newblock Hecke operators and equidistribution of {H}ecke points.
\newblock {\em Invent. Math.}, 144(2):327--351, 2001.

\bibitem[CS88]{ConwaySloane}
J.H. Conway and N.J.A. Sloane.
\newblock {\em Sphere packing, lattices and groups}, volume 290 of {\em
  Grundlehren de mathemtische wissenschaften}.
\newblock Springer, 1988.

\bibitem[DKSS13]{DKSS}
Zeev Dvir, Swastik Kopparty, Shubhangi Saraf, and Madhu Sudan.
\newblock Extensions to the method of multiplicities, with applications to
  {K}akeya sets and mergers.
\newblock {\em SIAM J. Comput.}, 42(6):2305--2328, 2013.

\bibitem[Dvi09]{Dvir}
Zeev Dvir.
\newblock On the size of {K}akeya sets in finite fields.
\newblock {\em J. Amer. Math. Soc.}, 22(4):1093--1097, 2009.

\bibitem[ELZ05]{Urietal}
Uri Erez, Simon Litsyn, and Ram Zamir.
\newblock Lattices which are good for (almost) everything.
\newblock {\em IEEE Trans. Inform. Theory}, 51(10):3401--3416, 2005.

\bibitem[GL87]{GL}
P.~M. Gruber and C.~G. Lekkerkerker.
\newblock {\em Geometry of numbers}, volume~37 of {\em North-Holland
  Mathematical Library}.
\newblock North-Holland Publishing Co., Amsterdam, second edition, 1987.

\bibitem[Gri85]{Gritzmann}
Peter Gritzmann.
\newblock Lattice covering of space with symmetric convex bodies.
\newblock {\em Mathematika}, 32(2):311--315 (1986), 1985.

\bibitem[HLR09]{HLR09}
Ishay Haviv, Vadim Lyubashevsky, and Oded Regev.
\newblock A note on the distribution of the distance from a lattice.
\newblock {\em Discrete {\&} Computational Geometry}, 41(1):162--176, 2009.

\bibitem[KLSS11]{KLSS}
Swastik Kopparty, Vsevolod~F. Lev, Shubhangi Saraf, and Madhu Sudan.
\newblock Kakeya-type sets in finite vector spaces.
\newblock {\em J. Algebraic Combin.}, 34(3):337--355, 2011.

\bibitem[KM99]{Kleinbock_Margulis}
D.~Y. Kleinbock and G.~A. Margulis.
\newblock Logarithm laws for flows on homogeneous spaces.
\newblock {\em Invent. Math.}, 138(3):451--494, 1999.

\bibitem[Rag72]{Raghunathan}
M.~S. Raghunathan.
\newblock {\em Discrete subgroups of {L}ie groups}.
\newblock Springer-Verlag, New York-Heidelberg, 1972.
\newblock Ergebnisse der Mathematik und ihrer Grenzgebiete, Band 68.

\bibitem[Rog58]{Rogers_bound}
C.~A. Rogers.
\newblock Lattice covering of space: {T}he {M}inkowski-{H}lawka theorem.
\newblock {\em Proc. London Math. Soc. (3)}, 8:447--465, 1958.

\bibitem[Rog59]{Rogers_again}
C.~A. Rogers.
\newblock Lattice coverings of space.
\newblock {\em Mathematika}, 6:33--39, 1959.

\bibitem[Rog64]{Rogers_pandc}
C.~A. Rogers.
\newblock {\em Packing and covering}.
\newblock Cambridge Tracts in Mathematics and Mathematical Physics, No. 54.
  Cambridge University Press, New York, 1964.

\bibitem[Sch58]{Schmidt-admissible}
Wolfgang~M. Schmidt.
\newblock The measure of the set of admissible lattices.
\newblock {\em Proc. Amer. Math. Soc.}, 9:390--403, 1958.

\bibitem[Sie45]{SiegelFormula}
Carl~Ludwig Siegel.
\newblock A mean value theorem in geometry of numbers.
\newblock {\em Ann. of Math. (2)}, 46:340--347, 1945.

\bibitem[Str12]{Strombergsson}
Andreas Str\"{o}mbergsson.
\newblock On the limit distribution of {F}robenius numbers.
\newblock {\em Acta Arith.}, 152(1):81--107, 2012.

\end{thebibliography}

\end{document}